\def\paragraph{\@startsection{paragraph}{4}%
  \z@\z@{-\fontdimen2\font}%
  {\normalfont\bfseries}}
\newcommand{\frechet}{Fr\'{e}chet } % Frechet mean with accent
\newcommand{\fr}{F} %Frechet function
\newcommand{\R}{\mathbb{R}}
\newcommand{\mc}{\mathcal}
\newcommand{\g}{\gamma}
\newcommand{\w}{\omega}
\newcommand{\s}{\sigma}
\newcounter{desccount}
\newcommand{\descref}[1]{\hyperref[#1]{#1}}
\newcommand{\lp}{\left(}
\newcommand{\rp}{\right)}
\newcommand{\lbar}{\left|}
\newcommand{\rbar}{\right|}
\newcommand{\lnorm}{\left\|}
\newcommand{\rnorm}{\right\|}
\newcommand\mattwo[4]{\left(\begin{smallmatrix}
			{#1} & {#2}\\
     			{#3} & {#4}
                     \end{smallmatrix}\right)}
\newcommand{\borel}{\operatorname{Borel}}
\newcommand{\la}{\langle}
\newcommand{\ra}{\rangle}
\newcommand{\norm}[1]{\|#1\|} 
\newcommand{\Nm}{\mathcal{N}}
\newcommand{\Ngen}{\mathcal{N}}
\newcommand{\dis}{\operatorname{dis}}		%distortion
\newcommand{\dn}{d_{\mathcal{N}}}	%d_N_One, distance between networks
\newcommand{\coup}{\mathscr{C}}
\newcommand{\eqN}{[\mathcal{N}]}
\newcommand{\diag}{\operatorname{diag}}
\newcommand{\tr}{\operatorname{tr}}
\newtheorem{theorem}{Theorem}
\newtheorem{proposition}[theorem]{Proposition}
\newtheorem{lemma}[theorem]{Lemma}
 \theoremstyle{definition}
 \newtheorem{example}[theorem]{Example}
\newtheorem{remark}[theorem]{Remark}
\newtheorem{definition}{Definition}
\newcommand{\pushright}[1]{\ifmeasuring@#1\else\omit\hfill$\displaystyle#1$\fi\ignorespaces}
\newcommand{\pushleft}[1]{\ifmeasuring@#1\else\omit$\displaystyle#1$\hfill\fi\ignorespaces}
\ifcvprfinal\pagestyle{empty}\fi
\begin{document}

%%%%%%%%% TITLE
\title{Gromov-Wasserstein Averaging in a Riemannian Framework}

\author{Samir Chowdhury\\
Stanford University \\
Department of Psychiatry and Behavioral Sciences \\  
{\tt\small samirc@stanford.edu}
% For a paper whose authors are all at the same institution,
% omit the following lines up until the closing ``}''.
% Additional authors and addresses can be added with ``\and'',
% just like the second author.
% To save space, use either the email address or home page, not both
\and
Tom Needham\\
Florida State University \\
Department of Mathematics \\
{\tt\small tneedham@fsu.edu}
}

\maketitle
%\thispagestyle{empty}

%%%%%%%%% ABSTRACT
\begin{abstract}
We introduce a theoretical framework for performing statistical tasks---including, but not limited to, averaging and principal component analysis---on the space of (possibly asymmetric) matrices with arbitrary entries and sizes. This is carried out under the lens of the Gromov-Wasserstein (GW) distance, and our methods translate the Riemannian framework of GW distances developed by Sturm into practical, implementable tools for network data analysis. Our methods are illustrated on datasets of letter graphs, asymmetric stochastic blockmodel networks, and planar shapes viewed as metric spaces. On the theoretical front, we supplement the work of Sturm by producing additional results on the tangent structure of this ``space of spaces", as well as on the gradient flow of the Fr\'{e}chet functional on this space.   
\end{abstract}

%%%%%%%%% BODY TEXT
\section{Introduction}

In a variety of data analysis contexts, one often obtains matrices which are square and asymmetric. Often these matrices arise when studying \emph{networks} \cite{newman2010networks} where the relationships between nodes cannot be measured directly, but have to be inferred from the activity of the nodes themselves. This is the case for biological networks such as the brain, gene regulation pathways, and protein interaction networks.

Inspired by this connectivity paradigm, we refer to arbitrary square matrices as \emph{networks}. The row/column labels are referred to as \emph{nodes}, and the matrix entries are referred to as \emph{edge weights}. Such matrix datasets commonly arise in many other use cases. For example, a practitioner is typically confronted with an $n\times p$ data matrix $X$ where each row is an observation and each column is a variable, from which the covariance matrix is formed. If the dataset is Euclidean, then there is a well-understood duality between the covariance of the variables and the pairwise distances between the observations. More generally, the dataset could be sampled from a Riemannian manifold (or from a distribution whose high density regions live near such a manifold), and the distances between the points could be given by the geodesic distances on the manifold. Even more generally, it may be the case that the data is sampled from a Finsler manifold, and one has access to the quasimetric defined by the asymmetric length structure of the manifold. This may occur when one is sampling data from a dynamical system driven by some potential function: the asymmetry arises because traveling up the potential function is costlier than traveling down \cite{bao2012introduction}. 

In the interest of performing statistics on such data, it is natural to ask how one obtains a \emph{mean} of such matrices. Simply taking a coordinatewise mean does not work in many cases, e.g. when the matrices are of different sizes or are unlabeled. In such situations, one needs to first perform an alignment/registration task that optimally matches the nodes of one network to the nodes of the other. If the matrices are the same size, then the most obvious approach would be to search for an optimal permutation to match nodes between the networks. However, this idea is too restrictive as real-world datasets are frequently of unequal size. Moreover, for large matrices, searching over permutations is prohibitively computationally expensive. For these reasons, one introduces the idea of ``probabilistic matchings". Here, each node is assigned a weight, so that the total weight of the network is one (i.e., a probability measure is assigned to the nodes of the network). Instead of searching over permutations to match nodes between a pair of networks, we can then instead search over the convex set of \emph{couplings} of their probability measures (that is, joint probability distributions whose marginals agree with the original distributions on the input networks). This is the essential idea of \emph{Gromov-Wasserstein distance}, which is defined below.

The goal of this paper is to introduce a theoretical framework for statistical computations on the space of networks. This is achieved by fusing theoretical results on Gromov-Wasserstein distance \cite{sturm2012space}, algorithms for statistics on Riemannian manifolds \cite{pennec}, and recent algorithmic advances for the computation of Gromov-Wasserstein distance \cite{pcs16}. Using this framework, we are able to perform not just averaging, but a plethora of statistical tasks such as principal component analysis and support vector machine classification.

\subsection{Previous Work}

A \emph{metric measure (mm) space} is a compact metric space endowed with a Borel  probability measure. \emph{Gromov-Wasserstein (GW) distance} was first introduced as a metric on the space of all (isomorphism classes of) mm spaces. Theoretical aspects of the GW distance were explored in \cite{dgh-sm,dghlp, sturm2012space}. The work in \cite{dgh-sm,dghlp} was already focused on applications to object matching, while \cite{sturm2012space} explored the Riemannian-like structures induced by GW distance.

In recent years, GW distance has garnered interest in data science communities as a way to compare unlabeled datasets, or datasets containing samples from different ambient spaces. For example, GW distance has been used to explore a variety of network datasets \cite{hendrikson2016using}, as a metric alignment layer in deep learning algorithms for object classification \cite{ezuz2017gwcnn}, to align word embedding spaces for translation applications \cite{alvarez2018gromov}, for several tasks in analysis of large graphs and networks \cite{xu2019gromov,xu2019scalable}, and has been incorporated into generative models across incomparable spaces \cite{bunne2019learning}. Several specialized variants of GW distance have also been recently introduced \cite{memoli2018gromov,titouan2019optimal,titouan2019sliced}.

The problem of computing GW distance was studied from the algorithmic viewpoint in \cite{pcs16}, where a projected gradient descent algorithm was introduced. The main focus was on using GW distance to compute a \emph{Fr\'{e}chet mean with prescribed size} of distance (or kernel) matrices. The main idea of the present paper is to recast the work in \cite{pcs16} using the theoretical Riemannian framework of \cite{sturm2012space} together with statistical algorithms on Riemannian manifolds \cite{pennec}. 
By using this viewpoint, we are able to generalize the work of \cite{pcs16} to a gradient flow that theoretically prescribes the required size for a \frechet mean, while providing a flexible general framework for machine learning tasks on network-valued datasets. This includes the case of asymmetric networks.

An approach similar to that of the present paper to studying statistics on the space of graphs via Riemannian geometry was initiated in \cite{jain2009structure,jain2012learning}. Recently, these ideas were applied to formulate a theory of statistical shape analysis of embedded graphs in \cite{guo2019quotient}. These works perform analysis on graph space by aligning graph nodes over permutations or ``hard matchings", whereas our approach aligns networks via measure couplings or ``soft matchings". The differences between these theories are interesting, and we expect that the correct formalism to use is highly dependent on the particular application.

\subsection{Contributions}

Our specific contributions are as follows. We first provide the gradient of the GW functional on asymmetric networks. This complements a similar result for symmetric matrices in \cite{pcs16}. On the metric geometry side, we provide a concrete exposition of the tangent space structure on this space of asymmetric networks and of the construction of geodesics in the space of networks. This includes the, to our knowledge, first computationally feasible algorithm to produce Sturm geodesics.  We explicitly formulate the iterative \frechet mean algorithm of Pennec as gradient descent of the \frechet functional on the space of networks. The tangent structure provides a framework for vectorizing collections of networks in order to apply standard ML algorithms. We exemplify this by performing averaging and principal component analysis on a database of planar shapes. Our methods can also be used for network compression, and we illustrate this on a toy example of an asymmetric stochastic blockmodel network.

\section{Preliminaries on the GW distance and \frechet means}

\subsection{Networks and the GW distance}\label{sec:networks_GW_distance}

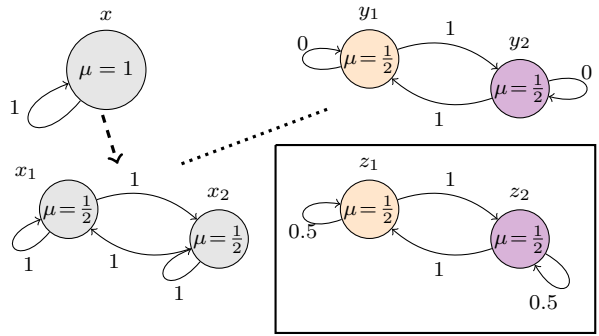
\begin{figure}
\begin{center}
\begin{tikzpicture}[every node/.style={font=\footnotesize},every path/.style={}]

\node[circle,draw,inner sep=0pt,minimum size=30pt,fill=gray!20,label=$x$](1) at (0.5,0.25){$\mu=1$};

\path[->] (1) edge [loop, min distance=10mm,out=230,in=200] node[above left,distance=20pt]{$1$}(1);

\begin{scope}[xshift = 1 cm]
\node[circle,draw,inner sep=0pt,minimum size=17pt,above,fill=orange!20,label=$y_1$](2) at (3,0){$\mu\!=\!\frac{1}{2}$};
\node[circle,draw,inner sep=0pt,minimum size=17pt,fill=violet!30,label=$y_2$](3) at (5,0){ $\mu\!=\!\frac{1}{2}$};
\path[->] (2) edge [loop left, min distance = 7mm] node[above]{$0$}(2);
\path[->] (3) edge [loop right, min distance = 7mm] node[above]{$0$}(3);
\path[->] (2) edge [bend left] node[above]{$1$} (3);
\path[->] (3) edge [bend left] node[below]{$1$} (2);
\end{scope}

\begin{scope}[xshift = 1 cm,yshift=-2cm]
\node[circle,draw,inner sep=0pt,minimum size=17pt,above,fill=orange!20,label=$z_1$](2) at (3,0){$\mu\!=\!\frac{1}{2}$};
\node[circle,draw,inner sep=0pt,minimum size=17pt,fill=violet!30,label=$z_2$](3) at (5,0){ $\mu\!=\!\frac{1}{2}$};
\path[->] (2) edge [loop,out=200,in=170,min distance = 7mm] node[below]{$0.5$}(2);
\path[->] (3) edge [loop,out=330,in=300,min distance = 7mm] node[below left]{$0.5$}(3);
\path[->] (2) edge [bend left] node[above]{$1$} (3);
\path[->] (3) edge [bend left] node[below]{$1$} (2);
\end{scope}

\draw[-,thick] (2.75,-0.75) -- (7,-0.75) -- (7,-3.25) -- (2.75,-3.25) -- (2.75,-0.75);

\draw[->,dashed,very thick] (0.45,-0.35) -- (0.65,-1);
\draw[-,dotted,very thick] (1.5,-1) -- (3.5,-0.25);

\begin{scope}[xshift = -3cm, yshift = -2cm]
\node[circle,draw,inner sep=0pt,minimum size=17pt,above,fill=gray!20,label={135:$x_1$}](2) at (3,0){$\mu\!=\!\frac{1}{2}$};
\node[circle,draw,inner sep=0pt,minimum size=17pt,fill=gray!20,label={$x_2$}](3) at (5,0){$\mu\!=\!\frac{1}{2}$};
\path[->] (2) edge [loop, out=230,in=200,min distance = 7mm] node[below right]{$1$}(2);
\path[->] (3) edge [loop,out=230,in=200, min distance = 7mm] node[below right]{$1$}(3);
\path[->] (2) edge [bend left] node[above left]{$1$} (3);
\path[->] (3) edge [bend left] node[below,pos=0.75]{$1$} (2);

\end{scope}

\end{tikzpicture}
\caption{Networks from Example \ref{ex:simplenets}. \textbf{Top row:} One-node network $X$ and two-node network $Y$. \textbf{Bottom left:} Blown-up form $\hat{X}$ of $X$. Dotted line shows networks to align. \textbf{Inset:} ``average" of $X$ and $Y$.}
\label{fig:simplenets}
\end{center}
\end{figure}

A \emph{measure network} is a triple $(X,\w_X,\mu_X)$ where $X$ is a Polish space (i.e. separable, completely metrizable), $\mu_X$ is a fully supported Borel probability measure, and $\w_X:X\times X \rightarrow \R$ is a square integrable function. The collection of all networks is denoted $\Nm$. When no confusion will arise, we abuse notation and denote the triple $(X,\omega_X,\mu_X)$ by $X$.

The notion of a measure network is quite general and includes several types of spaces which arise in applications. A graph can be represented as a measure network: $X$ is set of $n$ nodes, while $\omega_X$ provides relational information and could be represented by a weighted adjacency matrix, a graph Laplacian, or a matrix of graph distances (for connected graphs or strongly connected digraphs)---see Example \ref{ex:simplenets}. The probability measure $\mu_X$ can be taken to be uniform, or could more generally give higher weight to nodes deemed more important by a particular application. The notion of a measure network is also a strict generalization of that of a metric measure space, as defined in the previous section. For a finite metric measure space, $\omega_X$ can be represented as by its distance matrix. Our definition also includes infinite spaces---this is necessary for certain theoretical completeness results to hold, but the reader more interested in applications can safely restrict their attention to finite spaces.

A \emph{coupling} between two probability measures $\mu_X$ and $\mu_Y$ supported on Polish spaces $X$ and $Y$ is a probability measure on $X\times Y$ with marginals $\mu_X$ and $\mu_Y$. Stated differently, $\mu(A\times Y) = \mu_X(A)$ for all $A \in \borel(X)$ and $\mu(X\times B) = \mu_Y(B)$ for all $B \in \borel(Y)$. The collection of all such couplings is denoted $\coup(\mu_X,\mu_Y)$.

Intuitively, for finite measure networks $X$ and $Y$ and a coupling $\mu$, the value of the coupling $\mu(x_i,y_j)$ can be understood as an assignment of a certain proportion of the mass $\mu_X(x_i)$ to the point $y_j$. In the graph setting, this is a ``soft matching" of the nodes of $X$ and $Y$. One then wishes to find a soft matching which reflects similarity of $X$ and $Y$ as well as possible. This is formalized by the notion of \emph{distortion}, defined below.

Given two networks $(X,\w_X,\mu_X)$ and $(Y,\w_Y,\mu_Y)$ in $\Nm$, one defines the \emph{distortion functional} to be the map 
\begin{align*}
\dis: \coup(\mu_X,\mu_Y) &\rightarrow\R \\
\mu &\mapsto \norm{\w_X - \w_Y}_{L^2(\mu\otimes \mu)}.
\end{align*}
More explicitly, $\dis(\mu)$ is the quantity
$$
\int \lbar \w_X(x,x') - \w_Y(y,y') \rbar^2 \, d\mu(x',y')\,d\mu(x,y),
$$
where the integral is taken over the space $X \times Y \times X \times Y$. The \emph{Gromov-Wasserstein distance} between networks $(X,\w_X,\mu_X)$ and $(Y,\w_Y,\mu_Y)$ is then defined by
$$
\dn(X,Y) := \frac{1}{2} \inf_{\mu \in \coup(\mu_X,\mu_Y)} \dis(\mu).
$$
One may similarly define the $p$-GW distance by taking the $L^p$ norm, but the constructions in this paper rely on the special structure of the $L^2$ case.

The following lemma shows that the infimum in the definition above is always achieved. Minimizers of $\dis$ are referred to as \emph{optimal couplings}. The proof follows directly from \cite[Lemma 1.2]{sturm2012space}.  

\begin{lemma}[Optimality of couplings, \cite{sturm2012space}] 
\label{lem:opt-coup}
Let $(X,\w_X,\mu_X), (Y,\w_Y,\mu_Y) \in \Nm$. Then there always exists a minimizer of $\dis$ in $\coup(\mu_X,\mu_Y)$. 
\end{lemma}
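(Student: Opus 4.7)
The plan is to invoke the direct method of the calculus of variations: (i) establish weak sequential compactness of $\coup(\mu_X,\mu_Y)$ in the weak topology on probability measures on $X \times Y$, (ii) establish lower semi-continuity of $\dis$ under weak convergence of couplings, and (iii) combine them. Concretely, I would pick $\mu_n \in \coup(\mu_X,\mu_Y)$ with $\dis(\mu_n) \to 2\,\dn(X,Y)$; compactness yields a weakly convergent subsequence $\mu_{n_k} \rightharpoonup \mu^\ast$; closedness of the coupling set under weak convergence guarantees $\mu^\ast \in \coup(\mu_X,\mu_Y)$; and lower semi-continuity then gives $\dis(\mu^\ast) \leq \liminf_k \dis(\mu_{n_k}) = 2\,\dn(X,Y)$, forcing $\mu^\ast$ to be optimal.

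For step (i), I would invoke Ulam's theorem to obtain tightness of each marginal on its respective Polish space: given $\e > 0$, choose compacts $K_X \subset X$ and $K_Y \subset Y$ with $\mu_X(X \setminus K_X), \mu_Y(Y \setminus K_Y) < \e/2$. Then for any $\mu \in \coup(\mu_X,\mu_Y)$, the marginal conditions force $\mu((K_X \times K_Y)^c) \leq \mu_X(X \setminus K_X) + \mu_Y(Y \setminus K_Y) < \e$. This is uniform tightness of $\coup(\mu_X,\mu_Y)$, and Prokhorov's theorem yields relative weak sequential compactness. Weak closedness of the coupling set is a routine verification, testing against functions that depend on only one factor, so $\coup(\mu_X,\mu_Y)$ is weakly sequentially compact.

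For step (ii), the clean case is when $\w_X, \w_Y$ are bounded and continuous: the integrand $F(x,y,x',y') := |\w_X(x,x') - \w_Y(y,y')|^2$ is then non-negative and continuous on $(X \times Y)^2$, the convergence $\mu_n \rightharpoonup \mu$ implies $\mu_n \otimes \mu_n \rightharpoonup \mu \otimes \mu$ (a standard fact about products of weakly convergent probabilities), and the Portmanteau theorem gives $\liminf_n \int F \, d(\mu_n \otimes \mu_n) \geq \int F \, d(\mu \otimes \mu)$, i.e., lower semi-continuity of $\dis^2$ and hence of $\dis$. The main obstacle is extending this to the merely square-integrable case. I would handle it by approximating $\w_X, \w_Y$ in $L^2$ by bounded continuous $\w_X^\d, \w_Y^\d$ (using density of $C_b$ in $L^2$ on Polish probability spaces) and controlling the approximation error uniformly in the coupling via the triangle inequality in $L^2(\mu \otimes \mu)$. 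The critical observation enabling this uniformity is that for any $\mu \in \coup(\mu_X,\mu_Y)$, the pushforward of $\mu \otimes \mu$ under $(x,y,x',y') \mapsto (x,x')$ is exactly $\mu_X \otimes \mu_X$ (and symmetrically for the other projection), so $\|\w_X - \w_X^\d\|_{L^2(\mu \otimes \mu)} = \|\w_X - \w_X^\d\|_{L^2(\mu_X \otimes \mu_X)}$ is bounded independently of $\mu$. Once this uniform error bound is in place, sending $\d \to 0$ after taking $\liminf$ transfers lower semi-continuity from the continuous approximants to the original $L^2$ integrand, completing the argument.
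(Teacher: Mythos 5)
Your argument is correct and is essentially the proof that the paper outsources to Sturm's Lemma~1.2: weak sequential compactness of $\coup(\mu_X,\mu_Y)$ via Ulam tightness and Prokhorov, plus lower semicontinuity of $\dis$ obtained by reducing to bounded continuous weight functions and controlling the $L^2$ approximation error uniformly over couplings through the marginal identity that the pushforward of $\mu\otimes\mu$ onto $X\times X$ is $\mu_X\otimes\mu_X$. No gaps; you have simply supplied the details the paper leaves to the citation.
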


In the finite setting, the notation of \cite{pcs16} admits some useful insights into this minimization problem. We present this notation now. Let $L: \R \times \R \rightarrow \R$ be a loss function. In our case, this will always be defined as $L(a,b) := \lbar a- b\rbar^2$. Next we switch to matrix notation: given a finite space $X = \{x_1,x_2,\ldots, x_n\}$, we write $X_{ik}$ to denote $\w_X(x_i,x_k)$ for $1\leq i,k \leq n$. Suppose $Y$ is another finite space of size $m$. The collection of couplings $\coup(\mu_X,\mu_Y)$ then consists of $n\times m$ matrices $C = (C_{ij})_{ij}$ such that
$$
\sum_i C_{ij} = \mu_Y(y_j) \;\; \mbox{ and } \;\; \sum_j C_{ij} = \mu_X(x_i).
$$
Then one defines the 4-way tensor $\mc{L}(X,Y) := \lp L(X_{ik},Y_{jl}) \rp_{ijkl}$. Given a 4-way tensor $\mc{L}$ and a matrix $\lp C_{ij}\rp_{ij}$, one defines the tensor-matrix multiplication 
\[\mc{L} \otimes C := \lp \sum_{kl} \mc{L}_{ijkl}C_{kl} \rp_{ij}.\]
Next, given two real-valued matrices $A$ and $B$ of the same dimensions, one writes $\la A, B\ra$ to denote the Frobenius inner product $\sum_{ij} A_{ij}B_{ij}$. As observed in \cite{pcs16}, the GW distance between two finite networks $X$ and $Y$ can be written as:
\[ \dn(X,Y) = \frac{1}{2} \min_{C \in \coup(\mu_X,\mu_Y)} \la \mc{L}(X,Y) \otimes C, C \ra^{1/2}.\]
For the reader's convenience, we verify that the dimensions are consistent. If $X$ is an $n$-point space and $Y$ is an $m$-point space. Then $C$ is an $n\times m$ coupling matrix, $\mc{L}(X,Y)$ is an $n^2 \times m^2$ tensor, and the product $\mc{L}(X,Y) \otimes C$ is an $n \times m$ matrix. An alternative matrix formulation \cite{s16} of the term inside the $\min$ is the following:
\begin{align}
    \big(   
    \la \mu_X, X.^{\wedge 2} \mu_X \ra + \la \mu_Y, Y.^{\wedge 2}\mu_Y \ra 
    - 2 \tr(C^TX^TCY)    
    \big)^{\frac{1}{2}}.
    \label{eq:gw-matrix}
\end{align}
Here $.^{\wedge 2}$ denotes the elementwise square. While this formulation is well-known, we provide details in the appendix for the reader's convenience.

\subsection{Weak isomorphism: From transport plans to transport maps via blow-ups}

The space $(\Ngen, \dn)$ is a pseudometric space. Networks $(X,\w_X,\mu_X)$ and $(Y,\w_Y,\mu_Y)$ satisfy $\dn(X,Y) = 0$ if and only if there exists a Borel probability space $(Z,\mu_Z)$ with maps $\pi_X:Z \rightarrow X$ and $\pi_Y : Z \rightarrow Y$ such that the pushforward measures satisfy  $(\pi_X)_\#\mu_Z = \mu_X$, $(\pi_Y)_\#\mu_Z = \mu_Y$, and the pullbacks $(\pi_X)^*\w_X$, $(\pi_Y)^*\w_Y$ satisfy $\norm{ (\pi_X)^*\w_X - (\pi_Y)^*\w_Y}_\infty = 0$, where $(\pi_X)^\ast \w_X(z,z'):=\w_X(\pi_X(z),\pi_X(z'))$.  In this case, $X$ and $Y$ are said to be \emph{weakly isomorphic} and write $X \cong^w Y$. The space $Z$ is referred to as a \emph{common expansion} of $X$ and $Y$. We write $[X] = [X,\w_X,\mu_X]$ to denote the weak isomorphism class of $X = (X,\w_X,\mu_X)$ in $\Ngen$. The collection of equivalence of classes of networks will be denoted $\eqN$. See also \cite{gwnets} for more details on weak isomorphism.

In the case of finite networks, weak isomorphism is especially useful as it allows one to convert transport plans (i.e., couplings between the networks' measures) to transport maps (i.e., measure-preserving maps between networks). This observation plays a key role in both our theory and algorithms. We present this construction next.

\begin{definition}[Blowups]
\label{def:blowup}
Let $X, Y$ be finite networks, and let $\mu \in \coup(\mu_X,\mu_Y)$. Let $\mathbf{u}:= (u_x)_{x\in X}$ be a vector where $u_x := |\{y \in Y : \mu(x,y) > 0\}|$. Also define $\mathbf{v}:= (v_y)_{y\in Y}$ by setting $v_y := |\{x \in X : \mu(x,y) > 0\}|$. Next define $X[\mathbf{u}]$ to be the node set $\bigcup_{x\in X}\{ (x,i) : 1 \leq i \leq u_x\} $. Fix $x \in X$ and let $y_1,y_2,\ldots, y_{u_x}$ denote the $y \in Y$ such that $\mu(x,y) >0$. Define $\mu_{X[\mathbf{u}]}((x,i)) := \mu(x,y_i)$. Finally, for $x,x' \in X$ and $1\leq i \leq u_x$, $1\leq j \leq u_{x'}$, define $\w_{X[\mathbf{u}]}((x,i),(x',j)) = \w_X(x,x')$. Similarly define $(Y[\mathbf{v}], \w_{Y[\mathbf{v}]}, \mu_{Y[\mathbf{v}]})$. The crux of this construction is that while $X[\mathbf{u}], Y[\mathbf{v}]$ are weakly isomorphic to $X$ and $Y$, respectively, the initial transport plan $\mu$ naturally expands to a transport map from $X[\mathbf{u}]$ to $Y[\mathbf{v}]$. We refer to the process of constructing $X[\mathbf{u}]$ from $X$ as a \emph{blow-up}.
\end{definition}

\begin{definition}[Alignment]\label{def:alignment}
Let $X, Y$ be finite networks on $n$ and $m$ nodes, respectively, and let $\mu$ be an optimal coupling. 
We refer to the $n \times m$ binary matrix $\mathbf{1}_{\mu > 0}$ as the \emph{binarization} of $\mu$: this matrix has the same dimensions as $\mu$, has a 1 where $\mu >0$, and $0$ elsewhere. 
By taking appropriate blow-ups, we obtain (possibly enlarged) networks $\hat{X}$, $\hat{Y}$ and an optimal coupling $\hat{\mu}$ such that the binarization $\mathbf{1}_{\hat{\mu} > 0}$ of $\hat{\mu}$ is a permutation matrix. 
Then we may \emph{align} $\hat{Y}$ to $\hat{X}$ by defining $ \hat{Y} \leftarrow \mathbf{1}_{\hat{\mu} > 0} \hat{Y} \mathbf{1}_{\hat{\mu} > 0}^T$. 
The corresponding realignment of the optimal coupling is given by $\hat{\mu} \leftarrow \mathbf{1}_{\hat{\mu} > 0}\hat{\mu}$. 
Note that we then have $\hat{\mu} = \diag(\mu_{\hat{X}})$.
We refer to this process of blowing up and realigning as \emph{aligning $Y$ to $X$}. 
After aligning, $\dn(X,Y)$ is given by
\[ \sum_{i, j = 1}^n |\w_{\hat{X}}(x_i,x_j) - \w_{\hat{Y}}(y_i,y_j)|^2 \mu_{\hat{X}}(x_i)\mu_{\hat{X}}(x_j).\]
\end{definition}

\begin{example}[Blowing up and aligning simple networks]
\label{ex:simplenets}
Consider the weighted networks $X$ and $Y$ shown in the top row of Figure \ref{fig:simplenets}. We represent these as measure networks by taking $\w_X$ and $\w_Y$ to be weighted adjacency matrices. Concretely, let $X = \{x\}$, $\w_X = (1)$, and $\mu_X(x) = 1$. Also let $Y = \{y_1,y_2\}$, $\w_Y = \mattwo{0}{1}{1}{0}$, and $\mu_Y(y_1) = \mu_Y(y_2) = 1/2$. Since $X$ is a one-node network, the unique coupling $\mu \in \coup(\mu_X,\mu_Y)$ is given by $\mu(x,y_1) = \mu(x,y_2) = 0.5$. To convert $\mu$ to a transport map, $X$ is blown-up to $\hat{X} = \{x_1,x_2\}$, with $\omega_{\hat{X}} =\mattwo{1}{1}{1}{1}$ and $\mu_{\hat{X}}(x_1) = \mu_{\hat{X}}(x_2) =  1/2$, whence $\hat{\mu} = \mattwo{0.5}{0}{0}{0.5}$. Intuitively, the average of $\hat{X}$ and $Y$ should be the network $Z = \{z_1,z_2\}$, $\w_Z = \mattwo{0.5}{1}{1}{0.5}$, and $\mu_Z(z_1) = \mu_Z(z_2) = 0.5$. This intuition will be formalized below.

\end{example}

\subsection{Computing GW distance}

It was implicitly observed in Section \ref{sec:networks_GW_distance} that for finite measure networks $X$ of size $n$ an $Y$ of size $m$, the squared distortion of a coupling matrix $C \in \coup(\mu_X,\mu_Y) \subset \R^{n \times m}$ is given by
$$
\dis(C)^2 =  \left<\mathcal{L}(X,Y)\otimes C, C\right>.
$$
For fixed $X$ and $Y$, let $A_{XY}$ denote the linear map from $\R^{n \times m}$ to itself given by $A_{XY}C := \mathcal{L}(X,Y)\otimes C$. The GW optimization problem seeks a minimizer of the map
\[
C \mapsto \left<A_{XY}C,C\right>
\]
over the convex polytope $\coup(\mu_X,\mu_Y)$ and is therefore an instance of a \emph{quadratic programming} problem. 

Following \cite{pcs16}, we approximate GW distance by finding local minimizers for the GW optimization problem via projected gradient descent. Since we are allowing asymmetric weight functions, the linear map $A_{XY}$ may be asymmetric. This distinguishing feature from the setting of \cite{pcs16} must be accounted for when computing the gradient.

\begin{proposition}\label{prop:gradient_of_GW}
The gradient of $C \mapsto \left<A_{XY}C,C\right>$ is given by 
$$
\lp A_{XY} + A_{XY}^\ast \rp C,
$$
with $A_{XY}^\ast$ denoting the adjoint of $A_{XY}$. 
\end{proposition}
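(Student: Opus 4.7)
The plan is a direct calculation of the directional derivative of $f(C) := \langle A_{XY} C, C \rangle$. Since $\coup(\mu_X, \mu_Y)$ sits inside the ambient Euclidean space $\R^{n \times m}$ (equipped with the Frobenius inner product), the (unconstrained) gradient at $C$ is characterized by the identity $Df(C)[H] = \langle \nabla f(C), H \rangle$ for every direction $H \in \R^{n \times m}$, and it is this $\nabla f(C)$ that one projects onto the tangent cone of the transportation polytope for the projected gradient step.

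First I would expand $f(C + tH)$ using bilinearity of the inner product together with linearity of $A_{XY}$:
\begin{align*}
f(C + tH) &= \langle A_{XY}(C + tH),\, C + tH \rangle \\
&= f(C) + t\langle A_{XY} C, H \rangle + t\langle A_{XY} H, C \rangle + t^2 \langle A_{XY} H, H \rangle.
\end{align*}
Differentiating at $t = 0$ yields $Df(C)[H] = \langle A_{XY} C, H \rangle + \langle A_{XY} H, C \rangle$. The second term is not yet in the form $\langle \cdot, H \rangle$, so next I would rewrite it by invoking the defining property of the adjoint (with respect to the Frobenius inner product): $\langle A_{XY} H, C \rangle = \langle H, A_{XY}^\ast C \rangle = \langle A_{XY}^\ast C, H \rangle$. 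Combining gives
\[
Df(C)[H] \;=\; \langle (A_{XY} + A_{XY}^\ast)\, C,\, H \rangle,
\]
and since $H$ was arbitrary, we conclude $\nabla f(C) = (A_{XY} + A_{XY}^\ast) C$.

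No genuine obstacle arises here; the calculation is standard for quadratic forms. The only conceptual point worth flagging, which the authors highlight in the paragraph preceding the statement, is that in the symmetric setting of \cite{pcs16} one has $A_{XY} = A_{XY}^\ast$ and the gradient collapses to $2 A_{XY} C$, whereas in the asymmetric setting treated here the symmetrization $A_{XY} + A_{XY}^\ast$ is genuinely needed. If desired, one could package the argument slightly more compactly by decomposing $A_{XY} = S + K$ into its self-adjoint and skew-adjoint parts and noting that $\langle K C, C \rangle = 0$, but this simply recovers the same final expression via $\nabla \langle S C, C \rangle = 2 S C = (A_{XY} + A_{XY}^\ast) C$.
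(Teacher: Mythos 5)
Your proof is correct and rests on the same underlying observation as the paper's: the quadratic form only sees the self-adjoint part of $A_{XY}$, so the gradient is $(A_{XY}+A_{XY}^\ast)C$. The paper packages this as a one-line reduction---showing $\langle A_s C, C\rangle = \langle A_{XY}C, C\rangle$ for the symmetrization $A_s = \frac{1}{2}(A_{XY}+A_{XY}^\ast)$ and then deferring to the symmetric-case computation of the cited prior work---whereas your direct expansion of $f(C+tH)$ is self-contained and makes the adjoint step explicit; either is fine.
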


\subsection{\frechet means}

Given a collection of networks $S = \{ X_1,X_2,\ldots, X_n \}$, a \emph{\frechet mean} of $S$ is a minimizer of the functional 
\[\fr_S(Z) := \frac{1}{n}\sum_{i = 1}^n \dn(X_i,Z)^2.\]

In \cite{pcs16}, the approach for calculating a \frechet mean was as follows: (1) fix a cardinality $N$ for the target space $Z$, (2) minimize over choices of $\w_Z$, and (3) optimize over couplings $C_i \in \coup(\mu_{X_i},\mu_Z)$. The last two steps are repeated until convergence. 

In contrast, the scheme we present in this paper follows ideas of Pennec on averaging in (finite-dimensional) Riemannian manifolds \cite{pennec} coupled with the work of Sturm on developing the Riemannian structure of generalizations of metric measure spaces. Informally, the idea is as follows: start with a ``seed" network $X$, use log maps to lift geodesics $X \rightarrow X_i$ to vectors in the tangent space at $X$, average the vectors, use an exponential map to map down to $\Ngen$, and iterate this procedure until convergence. Theoretically, we justify this procedure by showing that it agrees with the downward gradient flow of the \frechet functional.

\section{Metric geometry of $\eqN$}

Given a network $(X,\w_X,\mu_X)$, the only requirement on $\w_X$ is that it needs to be square integrable; i.e., we need $\w_X \in L^2(X^2, \mu_X^{\otimes 2})$. We will show that this flexibility allows us to define structures on $\eqN$ analogous to those of a Riemannian manifold, such as geodesics, tangent spaces and exponential maps. These structures are defined using language from the theory of analysis on metric spaces. In our setting, they can be defined in a surprisingly concrete way, allowing us to sidestep the need to invoke any deep concepts or results---see \cite{burago2001course,ambrosio2008gradient} for general introductions to the theory.

\subsection{Geodesics}

In the metric geometry sense, a \emph{geodesic} from $[X]$ to $[Y]$ in $\eqN$ is a continuous map $\g$ from a closed interval $[S,T]$ into $\eqN$ satisfying the property
\begin{equation}\label{eqn:geodesic_property}
\dn(\g(s),\g(t)) = \frac{|t-s|}{T-S} \cdot \dn([X],[Y])
\end{equation}
for all $s,t \in [S,T]$. The geodesic is \emph{unit speed} if $T-S = \dn([X],[Y])$. We can of course assume without loss of generality that our domain interval is always of the form $[0,T]$. We say that a geodesic $\gamma:[0,T] \rightarrow \eqN$ \emph{emanates from $[X]$} if $\gamma(0) = [X]$. 

It follows from work in \cite{sturm2012space} that geodesics can always be constructed in $\eqN$ (although they need not be unique).
Let $X,Y \in \Nm$, and let $\mu$ be an optimal coupling (cf. Lemma \ref{lem:opt-coup}). For each $t \in [0,1]$, define 
\begin{equation}\label{eqn:geodesic}
\g(t):= [X\times Y, \Omega_t, \mu],
\end{equation}
where 
\begin{align*} 
&\Omega_t((x,y),(x',y')):= (1-t)\,\w_X(x,x') + t\, \w_Y(y,y').
\end{align*}
It is easy to see that $\g(0) = [X]$ and $\g(1) = [Y]$. A relatively straightforward computation then shows that $\g$ satisfies \eqref{eqn:geodesic_property} (cf. \cite[Theorem 3.1]{sturm2012space}). Note that the underlying set of this geodesic is always $X\times Y$. In particular, $\g(0)$ is the triple $[X\times Y, \w_X, \mu]$ where $\w_X$ is defined (by abuse of notation) on $X\times Y \times X \times Y$ as $ \w_X \lp (x,y),(x',y')\rp := \w_X(x,x').$ Here the $\w_X$ on the right hand side is the original function defined on $X \times X$.

For networks $X$ and $Y$ of sizes $m$ and $n$, respectively, a naive implementation of the geodesic described above is represented by a measure network of size $m \cdot n$. We later consider iterative algorithms where this size blowup would quickly become intractable. Instead, we offer the following minimal size representation of a geodesic. We compute an optimal coupling of $X$ and $Y$, then blow up and align the networks as in Definitions \ref{def:blowup} and \ref{def:alignment}; an example for networks coming from simple graphs is shown in Figure \ref{fig:splittingAndMatching}. The geodesic described above is then represented (up to weak isomorphism) by interpolating the blown up and aligned weight matrices for $X$ and $Y$; such a geodesic for graph networks is shown in Figure \ref{fig:gromov_wasserstein_geodesic}. We have observed empirically that such a minimal size geodesic is typically represented as a path of matrices of size proportional to $m + n$, rather than the size $m \cdot n$ naive representation. Experimental evidence of this size reduction is provided in the supplementary materials, Section \ref{sec:support_sizes}.

\begin{figure}
    \centering
    \includegraphics[width=0.4\textwidth]{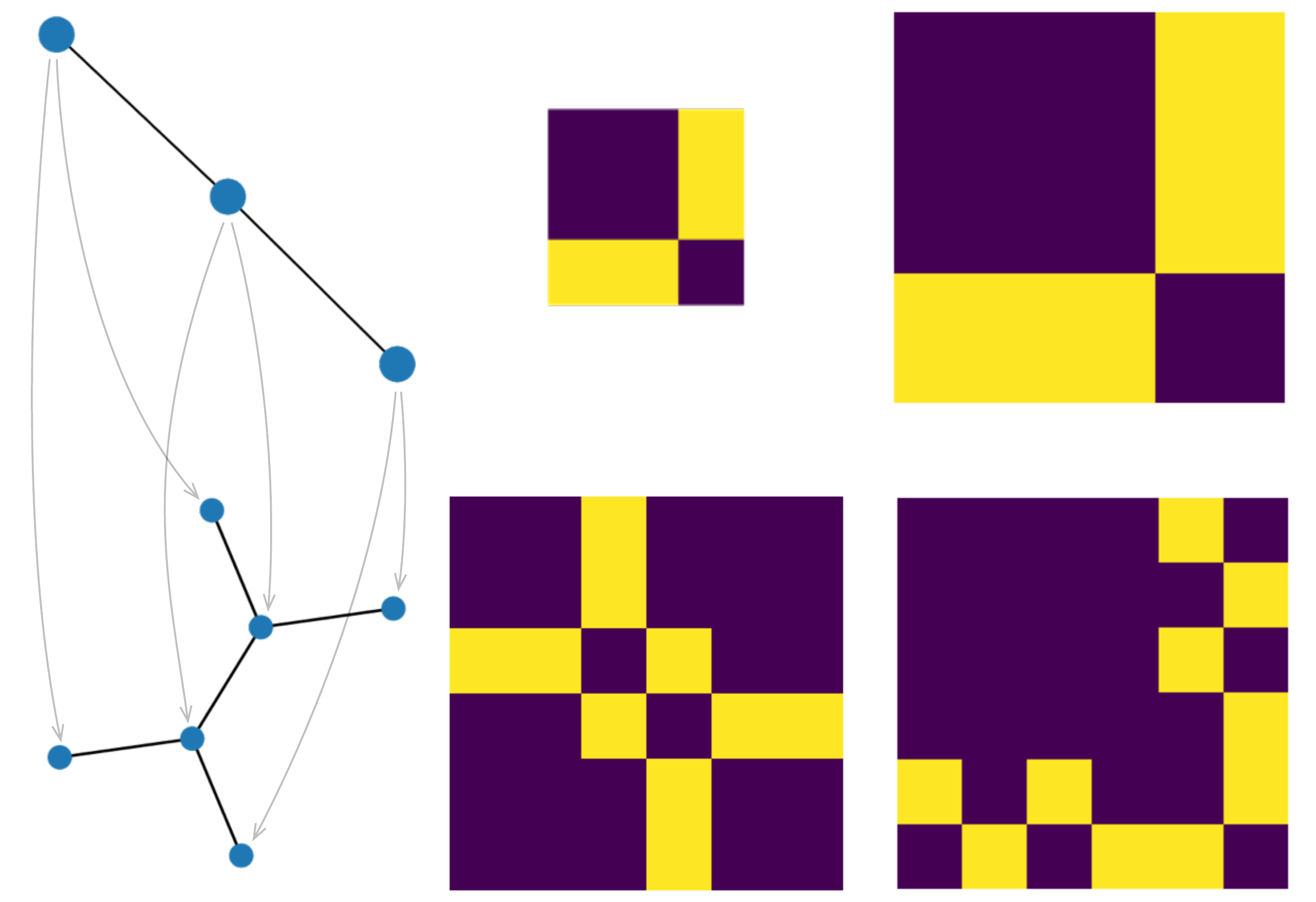}
    \caption{\textbf{Left column:} Two graphs to be matched. Each graph is a measure network with all edge weights equal to one and uniform node weights. The lighter arrows depict an optimal coupling between the graphs; the mass from each node from the first graph is distributed evenly to two nodes in the second graph as indicated.
    \textbf{Middle column:} Graphs are represented as measure networks by their adjacency matrices of size $3 \times 3$ and $6 \times 6$, respectively. 
    \textbf{Right column:} Matrix representations of the measure networks after they have been blown up and aligned according to the optimal coupling. Resulting matrices are both size $6 \times 6$. The matrix for the first graph is doubled in size; at a graph level, copies of each node are created. The matrix for the second graph has remained the same size but has been permuted; this is just a relabelling of the nodes based on how they are matched with nodes of the first graph.}
    \label{fig:splittingAndMatching}
\end{figure}

\begin{figure*}
    \centering
    \includegraphics[width = 0.9 \textwidth]{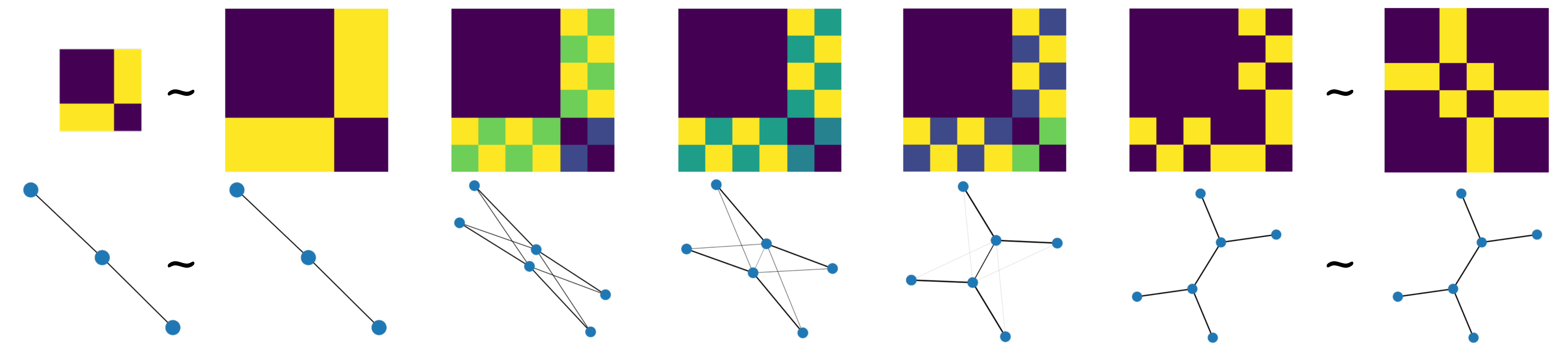}
    \caption{Geodesic between the graphs of Figure \ref{fig:splittingAndMatching}. The adjacency matrices are blown up and realigned. The blown up graph second from the left is drawn with split nodes superimposed. The geodesic interpolates matched edge weights. At the graph level, the geodesic is illustrated with node sizes corresponding to node weights and edge thickness corresponding to edge weights.}
    \label{fig:gromov_wasserstein_geodesic}
\end{figure*}

\subsection{Tangent space}

For a point $[X] \in \eqN$, we define the \emph{tangent space} to be:
\[
T_{[X]} := \bigcup_{Z \in [X]} L^2 \lp Z^2,\mu_Z^{\otimes 2}\rp/\sim,
\]
where $\sim$ is defined as follows. For $\lp Y,\omega_Y,\mu_Y\rp$ and $\lp Z, \omega_Z, \mu_Z \rp$ in $[X]$ and  functions $f \in L^2 \lp Y^2,\mu_Y^{\otimes 2}\rp$ and $g \in L^2\lp Z^2,\mu_Z^{\otimes Z}\rp$, we declare $f \sim g$ if and only if there exists a coupling $\mu$ of $\mu_Y$ and $\mu_Z$ such that 
$$
\omega_Y(y,y') = \omega_Z(z,z') \;\; \mbox{ and } \;\; f(y,y') = g(z,z')
$$
for $\mu^{\otimes 2}$-a.e. $\lp (y,z), (y',z') \rp \in (Y \times Z)^2$. Elements of $T_{[X]}$ are called \emph{tangent vectors to $[X]$} and are denoted $[f]$, where $f$ is an $L^2$ function defined on some representative of $[X]$.

We define an inner product $\left<\cdot,\cdot\right>_{[X]}$ on each tangent space $T_{[X]}$ as follows: for  $f \in L^2(Y^2,\mu_Y^{\otimes 2})$ and $g \in L^2((Y')^2, \mu_Y^{\otimes 2})$ with $Y,Y' \in [X]$,
\begin{align*}
&\left<[f],[g] \right>_{[X]} := \left<(\pi_Y)^\ast f, (\pi_{Y'})^\ast g \right>_{L^2(Z^2,\mu_Z^{\otimes 2})},
\end{align*}
where $Z$ is any measure network realizing the `tripod' in the definition of weak isomorphism between $Y$ and $Y'$. One can check that this value does not depend on any of the choices made and therefore gives a well-defined inner product. The norm induced by this inner product is denoted $\|\cdot\|_{[X]}$. One can check that it reduces to the formula
$$
\|[f]\|_{[X]} := \|f\|_{L^2(Y^2,\mu_Y^{\otimes 2})},
$$
where $f \in L^2(Y^2,\mu_Y^{\otimes 2})$ and $Y \in [X]$. 

\begin{remark}
The tangent space $T_{[X]}$ is not a bona fide vector space, but is a vector space quotiented out by a ``symmetry group" consisting of optimal self-couplings of a certain minimal representative of $X$ \cite[Section 6]{sturm2012space}. When $[X]$ has a representative with only the trivial symmetry, i.e. the diagonal coupling, the tangent space is a Hilbert space. In particular, this phenomenon endows $[\Ngen]$ with the structure of a Riemannian orbifold. 
\end{remark}

\subsection{Exponential map}

We define the \emph{exponential map at $[X]$},
$$
\exp_{[X]}: T_{[X]} \rightarrow \eqN,
$$
as follows. For $f \in L^2 \lp Z,\mu_Z^{\otimes 2}\rp$, with $Z \in [X]$, let
\[
\exp_{[X]}([f]) := [Z, \omega_Z + f, \mu_Z].
\]
After unwrapping the various notions of equivalence involved, one is able to show that this map is well-defined. This map is analogous to the exponential map in a Riemannian manifold. We demonstrate this concretely in the finite setting with the next proposition.

\begin{proposition}\label{prop:tangent_matrices}
Let $X$ be an finite measure network. There exists  $\epsilon_{[X]} > 0$ such that for any tangent vector represented by $f \in L^2(Z^2,\mu_Z^{\otimes 2})$ with $Z \in [X]$ satisfying $|f(z,z')| < \epsilon_{[X]}$ for all $(z,z') \in Z \times Z$, $\exp_{[X]}([f])$ is the endpoint of a geodesic emanating from $[X]$.

\end{proposition}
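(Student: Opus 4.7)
The plan is to show that for $\|f\|_\infty$ sufficiently small, the straight-line path $\gamma:[0,1] \to \eqN$ defined by $\gamma(t) := [Z,\omega_Z + tf,\mu_Z]$ is a minimizing geodesic; since $\gamma(0) = [X]$ and $\gamma(1) = \exp_{[X]}([f])$ by the definition of the exponential map, this establishes the proposition. The key reduction is that $\gamma$ coincides, up to weak isomorphism witnessed by the diagonal embedding $z \mapsto (z,z)$, with the Sturm geodesic \eqref{eqn:geodesic} built on $Z \times Z$ using the diagonal self-coupling $D_{ij} := \mu_Z(z_i)\delta_{ij}$ between $Z$ and $Z' := (Z,\omega_Z + f,\mu_Z)$. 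Hence everything reduces to establishing that $D$ is an optimal coupling for $\dn(Z, Z')$ whenever $\|f\|_\infty$ is small enough, for in that case the upper bound $\dn(\gamma(s),\gamma(t)) \leq \tfrac{|t-s|}{2}\|f\|_{L^2}$ furnished by the diagonal coupling of $\gamma(s)$ and $\gamma(t)$, combined with the triangle inequality, forces the geodesic identity.

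To analyze optimality I would expand the squared distortion of an arbitrary $C \in \coup(\mu_Z,\mu_Z)$ as
\[
h(C;f) = A(C) - 2B(C;f) + \|f\|_{L^2}^2,
\]
where $A(C) := \sum_{ijkl}(\omega_{ik} - \omega_{jl})^2 C_{ij}C_{kl}$ is the self-distortion of $Z$ under $C$ and $B(C;f) := \sum_{ijkl}(\omega_{ik} - \omega_{jl}) f_{jl} C_{ij} C_{kl}$ is linear in $f$. Both $A$ and $B$ vanish at $D$, so $h(D;f) = \|f\|_{L^2}^2$, and optimality of $D$ is equivalent to $A(C) \geq 2B(C;f)$ for every coupling $C$. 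Let $\mathcal{A}^*(Z) := \{C : A(C) = 0\}$ denote the compact set of optimal self-couplings. For couplings uniformly bounded away from $\mathcal{A}^*(Z)$ one has $A(C) \geq c_\delta > 0$ by compactness, while $|B(C;f)| \leq M\|f\|_\infty$ for a constant $M$ depending only on $\omega_Z$, so the desired inequality follows once $\|f\|_\infty < c_\delta/(2M)$. A structural observation useful in the next step: on $\mathcal{A}^*(Z)$ itself the integrand of $B$ vanishes on $\supp(C^*)\times\supp(C^*)$, since $C^*_{ij}C^*_{kl} > 0$ forces $\omega_{ik} = \omega_{jl}$, hence $B(C^*;f) \equiv 0$ and every $C^* \in \mathcal{A}^*(Z)$ yields the same distortion $\|f\|_{L^2}^2$ as $D$.

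The principal obstacle is the regime of couplings $C$ close to but not in $\mathcal{A}^*(Z)$, where the naive Cauchy--Schwarz estimate $|B(C;f)| \leq \sqrt{A(C)} \cdot \|f\|_{L^2}$ is vacuous when $A(C) < 4\|f\|_{L^2}^2$. To close this gap I would exploit the polyhedral structure of $\coup(\mu_Z,\mu_Z)$: writing $C = C^* + H$ for a nearest $C^* \in \mathcal{A}^*(Z)$, the feasibility constraints force $H$ to have vanishing row and column sums and to be non-negative off the support of $C^*$. Expanding $A(C^* + H)$ and using that $(\omega_{ik}-\omega_{jl})^2$ vanishes on $\supp(C^*)\times\supp(C^*)$, the linear-in-$H$ contribution to $A$ equals $2\sum(\omega_{ik}-\omega_{jl})^2 C^*_{ij} H_{kl}$, a non-negative functional that is strictly positive on directions $H$ transverse to the tangent cone of $\mathcal{A}^*(Z)$ at $C^*$; a compactness argument on unit-norm transverse directions supplies a quantitative lower bound $c\,\|H\|_1$. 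Meanwhile $|B(C^*+H;f)| \leq M'\,\|H\|_1\,\|f\|_\infty$ by the analogous support-based vanishing argument applied to $B$. Choosing $\|f\|_\infty < c/(4M')$ then gives $A(C^*+H) - 2B(C^*+H;f) \geq 0$ uniformly in a neighborhood of $\mathcal{A}^*(Z)$, and taking $\epsilon_{[X]}$ to be the minimum of the thresholds produced in the two regimes finishes the proof.
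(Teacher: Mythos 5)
Your reduction is exactly the paper's: pass to the Sturm geodesic on $Z\times Z$ with the diagonal coupling, and reduce everything to showing that the diagonal coupling is optimal between $Z$ and $(Z,\omega_Z+f,\mu_Z)$, using the decomposition $\dis(C)^2 = A(C) - 2B(C;f) + \|f\|_{L^2}^2$ with the $\|f\|^2$ term coupling-independent. Where you diverge is in how you prove $A(C)\geq 2B(C;f)$, and this is where there is a genuine gap. The paper closes the argument with a single pointwise observation: setting $\epsilon_{[X]}$ to be half the minimal \emph{nonzero} value of $|\omega_Z(z_1,z_2)-\omega_Z(z_3,z_4)|$ (a finite set of values since $X$ is finite, and independent of the representative $Z$), the integrand $(\omega_Z(z_1,z_2)-\omega_Z(z_3,z_4))^2 + 2(\omega_Z(z_1,z_2)-\omega_Z(z_3,z_4))f(z_1,z_2)$ is nonnegative at every quadruple whenever $\|f\|_\infty<\epsilon_{[X]}$, so $A(C)-2B(C;f)\geq 0$ termwise for \emph{every} coupling $C$ simultaneously, with no case analysis and no compactness.

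Your two-regime substitute does not close the "near $\mathcal{A}^*(Z)$" case. The claimed lower bound $A(C^*+H)\geq c\,\|H\|_1$ for $H$ transverse to $\mathcal{A}^*(Z)$ is not justified and is false in general: the linear term $2\sum_{k,l}H_{kl}\sum_{i,j}(\omega_{ik}-\omega_{jl})^2C^*_{ij}$ vanishes on every feasible direction supported on cells that are "first-order flat" for $C^*$, and such directions need not be tangent to $\mathcal{A}^*(Z)$ (two flat cells $(k,l),(k',l')$ can still have $\omega_{kk'}\neq\omega_{ll'}$, so the quadratic term is positive and $C^*+tH\notin\mathcal{A}^*(Z)$). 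A nonnegative quadratic on a polytope generically satisfies only the quadratic growth bound $A(C)\geq c\,d(C,\mathcal{A}^*(Z))^2$, and comparing $c\|H\|_1^2$ against $|B(C^*+H;f)|\leq M'\|H\|_1\|f\|_\infty$ gives nothing for small $\|H\|_1$; your proposed compactness argument over "unit-norm transverse directions" also fails because that set is not closed (its closure meets $\ker L$). What rescues the inequality is that $B$ degenerates on exactly the same flat directions as $A$ --- which is precisely what the pointwise estimate captures automatically. A second, smaller issue: your constants $c_\delta$, $c$, $M'$ are tied to a fixed representative $Z$, whereas the proposition requires one $\epsilon_{[X]}$ valid for all $Z\in[X]$; the paper's $\epsilon_{[X]}$ depends only on the image of $\omega_X$ and is therefore representative-independent.
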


\subsection{Log map}

We wish to show that $\exp_{[X]}$ has a local inverse, called the \emph{log map at $[X]$}. Let $Y$ be a finite measure network and let $\mu$ be an optimal coupling of $X$ and $Y$. Define the \emph{log map with respect to $\mu$} as follows. Use $\mu$ to expand and align the measure networks to 
\begin{equation}\label{eqn:aligned_spaces}
\hat{X} = \lp \hat{X},\omega_{\hat{X}},\mu_{\hat{X}} \rp \;\; \mbox{ and } \;\; \hat{Y} = \lp \hat{X}, \omega_{\hat{Y}}, \mu_{\hat{X}} \rp
\end{equation}
so that the identity map on the set $\hat{X}$ induces an optimal coupling of $\hat{X}$ with $\hat{Y}$. We then define
\begin{equation}\label{eqn:log_map}
\log_{[X]}^\mu([Y]) := [\omega_{\hat{Y}} - \omega_{\hat{X}}].
\end{equation}
It immediately follows that
$$
\exp_{[X]}\lp \log_{[X]}^\mu ([Y]) \rp = [Y].
$$
This provides a surjectivity result for the exponential map. On the other hand, the following lemma provides an injectivity result. Its proof is similar to that of Proposition \ref{prop:tangent_matrices}.

\begin{lemma}\label{lem:bijective_lemma}
Let $X$ be a finite measure network. The exponential map $\exp_{[X]}$ is injective on the set of $[f]$ with $f \in L^2(Z^2,\mu_Z^{\otimes 2})$ such that $Z$ is finite and $f$ satisfies $|f(z,z')| < \epsilon_{[X]}/2$ for all $z,z' \in Z$. 
\end{lemma}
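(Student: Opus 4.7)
The plan is to leverage a spectral gap in the weight values of $\omega_X$. Since $X$ is finite, the essential range of $\omega_X$ is a finite set, and blow-ups preserve weight values, so every $Z \in [X]$ has weight function valued in the same finite set. In particular, if $\delta_X := \min\{|a-b| : a \neq b \text{ in the range of } \omega_X\} > 0$, I would assume (shrinking if necessary the $\epsilon_{[X]}$ produced by Proposition \ref{prop:tangent_matrices}) that $\epsilon_{[X]} \le \delta_X$; the case where $\omega_X$ is constant is degenerate and trivial.

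Now suppose $f_1, f_2$ are tangent vectors at $[X]$ with $|f_i(z,z')| < \epsilon_{[X]}/2$ and $\exp_{[X]}([f_1]) = \exp_{[X]}([f_2])$. Take representatives $f_i \in L^2(Z_i^2, \mu_{Z_i}^{\otimes 2})$ with finite $Z_i \in [X]$. Unpacking the weak isomorphism $(Z_1, \omega_{Z_1} + f_1, \mu_{Z_1}) \cong^w (Z_2, \omega_{Z_2} + f_2, \mu_{Z_2})$ supplies a common expansion $(W, \mu_W)$ with projections $\pi_i : W \to Z_i$ satisfying $(\pi_i)_\# \mu_W = \mu_{Z_i}$ and
\[ (\pi_1)^*(\omega_{Z_1} + f_1) = (\pi_2)^*(\omega_{Z_2} + f_2) \quad \mu_W^{\otimes 2}\text{-a.e.} \]

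Rearranging,
\[ (\pi_1)^*\omega_{Z_1} - (\pi_2)^*\omega_{Z_2} = (\pi_2)^*f_2 - (\pi_1)^* f_1 \quad \mu_W^{\otimes 2}\text{-a.e.,} \]
and the right-hand side is bounded pointwise in absolute value by $|f_1| + |f_2| < \epsilon_{[X]} \le \delta_X$. The left-hand side, at each point, is a difference of two values in the common essential range of $\omega_X$, so by definition of $\delta_X$ it is either $0$ or has absolute value at least $\delta_X$. The two bounds force the left-hand side to vanish almost everywhere, and hence $(\pi_1)^*\omega_{Z_1} = (\pi_2)^*\omega_{Z_2}$ a.e. The same expansion $(W,\mu_W)$ therefore witnesses the weak isomorphism of the base networks, while simultaneously $(\pi_1)^* f_1 = (\pi_2)^* f_2$ a.e. By the definition of the equivalence relation on $T_{[X]}$, this is exactly $[f_1] = [f_2]$.

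The main obstacle is conceptual rather than computational: the weak isomorphism guaranteed by the hypothesis only controls the sums $\omega_{Z_i} + f_i$, not the weights and perturbations separately. The factor $1/2$ in the bound $|f_i| < \epsilon_{[X]}/2$ is exactly what is needed so that the triangle inequality brings the perturbation difference strictly below $\delta_X$, allowing the spectral gap to decouple $\omega$ from $f$. A minor bookkeeping point is coordinating the $\epsilon_{[X]}$ chosen here with that of Proposition \ref{prop:tangent_matrices}; taking the minimum of the two works without loss.
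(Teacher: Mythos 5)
Your proof is correct and rests on exactly the ingredient the paper's (omitted) argument relies on: the strictly positive minimal gap $\delta_X$ in the finite image of $\omega_X$, of which $\epsilon_{[X]}$ is one half in the proof of Proposition \ref{prop:tangent_matrices}, so your triangle-inequality bound $|f_1|+|f_2|<\epsilon_{[X]}\le\delta_X$ forces the pulled-back weights to agree and decouples $\omega$ from $f$, after which the common expansion directly witnesses $f_1\sim f_2$. The paper only gestures at this by saying the proof is ``similar to that of Proposition \ref{prop:tangent_matrices}''; your argument is the natural completion of that sketch.
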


We now define the \emph{log map at $[X]$}, $\log_{[X]}$, to be the local inverse of $\exp_{[X]}$ on finite measure networks near $[X]$. For a finite measure network $Y$, we define $\log_{[X]}([Y]) = \log_{[X]}^\mu([Y])$ as in \eqref{eqn:log_map}, where $\mu$ is any optimal coupling of $X$ with $Y$. The lemma then provides a certificate to check that the image of the log map did not depend on a choice of optimal coupling.

\subsection{Gradients}

Let $F:\eqN \rightarrow \R$ be a functional, let $[X] \in \eqN$ and let $[f] \in T_{[X]}$. Define the \emph{directional derivative of $F$ at $[X]$ in the direction $[f]$} to be the limit
$$
D_{[f]} F ([X])  := \lim_{t \rightarrow 0^+} \frac{1}{t} \lp F\lp \exp_{[X]}([t \cdot f]) \rp - F([X]) \rp,
$$
provided it exists. We say that $F$ is \emph{differentiable} at $[X]$ if all directional derivatives exist. 

For a differentiable functional $F$, a \emph{gradient} of $F$ at $[X]$ is a tangent vector $\nabla F([X])$ satisfying
$$
D_{[f]} F ([X]) = \left<[f],\nabla F([X]) \right>_{[X]}
$$
for all $[f] \in T_{[X]}$. 

The next lemma follows from \cite[Lemma 6.24]{sturm2012space}.

\begin{lemma}
Let $F$ be a differentiable functional. If the gradient of $F$ at $[X]$ exists, then it is unique and satisfies
$$
\left\| \nabla F([X]) \right\|_{[X]} = \sup \left\{ D_{[f]} F ([X]) \mid \|[f]\|_{[X]} = 1 \right\}.
$$
\end{lemma}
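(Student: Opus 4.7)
The plan is to prove both uniqueness and the norm identity by the standard Hilbert-space argument, adapted carefully to the quotient structure of $T_{[X]}$. Both facts boil down to the defining equation $D_{[f]} F([X]) = \left<[f],\nabla F([X])\right>_{[X]}$ combined with the Cauchy--Schwarz inequality, which descends to $T_{[X]}$ because the inner product reduces to a genuine $L^2$ inner product on any common representative.

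First, I would dispatch uniqueness. Suppose $\nabla_1, \nabla_2$ are both gradients of $F$ at $[X]$, with representatives $f_1, f_2$. Using weak isomorphism, I can pull both back to $L^2$-functions on a common refinement $Z \in [X]$ (so in particular $f_1 - f_2 \in L^2(Z^2,\mu_Z^{\otimes 2})$ is a legitimate representative of a tangent vector). Subtracting the two gradient-defining equations gives
$$
\left<[h],[f_1-f_2]\right>_{[X]} = D_{[h]}F([X]) - D_{[h]}F([X]) = 0
$$
for every $[h] \in T_{[X]}$. Testing with $[h] = [f_1 - f_2]$ yields $\|[f_1-f_2]\|_{[X]}^2 = 0$, so $f_1 \sim f_2$ and the two gradients coincide as tangent vectors.

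For the norm formula, the upper bound $\|\nabla F([X])\|_{[X]} \geq \sup\{D_{[f]}F([X]) : \|[f]\|_{[X]} = 1\}$ follows from Cauchy--Schwarz applied to the defining equation:
$$
D_{[f]}F([X]) = \left<[f],\nabla F([X])\right>_{[X]} \leq \|[f]\|_{[X]} \cdot \|\nabla F([X])\|_{[X]} = \|\nabla F([X])\|_{[X]}.
$$
For the reverse inequality, assume $\nabla F([X]) \neq 0$ (else the identity is trivially $0 = 0$) and take the unit vector $[\tilde{f}] := [f/\|\nabla F([X])\|_{[X]}]$ where $f$ is any representative of $\nabla F([X])$. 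Scalar multiplication descends to the quotient (if a coupling $\mu$ witnesses $f \sim g$, the same coupling witnesses $tf \sim tg$, since the equality $f = g$ is preserved under scaling), so $[\tilde{f}]$ is a well-defined unit tangent vector attaining the supremum.

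The main obstacle is the quotient structure of $T_{[X]}$ flagged in the preceding remark: $T_{[X]}$ is not a bona fide Hilbert space but a vector-space quotient by the symmetry group of optimal self-couplings. However, this structural subtlety dissolves once one observes that (i) the inner product was defined via pullbacks precisely so that it descends to the quotient, (ii) on a common refinement the norm is literally $\|f\|_{L^2}$, so Cauchy--Schwarz transfers verbatim, and (iii) scalar multiplication and subtraction of representatives respect the equivalence relation $\sim$. With these three bookkeeping checks in hand, the argument proceeds exactly as it would in a finite-dimensional Euclidean space.
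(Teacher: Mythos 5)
Your argument is correct, and it is worth noting that the paper does not actually prove this lemma at all: it simply states that the result ``follows from Lemma 6.24 of Sturm,'' so there is no in-paper proof to compare against. What you supply is the standard Riesz/Cauchy--Schwarz argument --- uniqueness by testing the defining identity against the difference of two candidate gradients, the inequality $\sup \le \|\nabla F\|$ by Cauchy--Schwarz, and the attainment of the supremum by the normalized gradient itself (with the trivial case $\nabla F([X])=0$ handled separately) --- and within the framework the paper sets up this is exactly the right proof. You also correctly identify the only point where genuine care is required: $T_{[X]}$ is a union of $L^2$ spaces over representatives modulo $\sim$, so one must check that pulling two representatives back to a common expansion $Z \in [X]$, subtracting, and rescaling all produce legitimate tangent vectors, and that the quotient inner product of classes with representatives on the same $Z$ is computed by the honest $L^2$ pairing on $Z$ (so that bilinearity and Cauchy--Schwarz transfer). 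These checks go through given the paper's assertion that the inner product is well defined independently of the choice of tripod. The one caveat worth keeping in mind is that in Sturm's original, more careful treatment the tangent object is only a cone (a Hilbert space quotiented by the isotropy group of self-couplings), where bilinearity of the pairing is not automatic and the supremum formula is the substantive content of the lemma; your proof is valid in the idealized setting this paper adopts, which is the appropriate level of rigor here.
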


We are now able to derive an explicit expression for the gradient of the \frechet functional for finite networks.

\begin{proposition}\label{prop:gradient_frechet_functional}
Let $S= \{Y_1,Y_2,\ldots, Y_n\}$ be a collection of finite networks, and let $X$ be another finite network. Suppose each $Y_i$ has been aligned to $X$, so that each of $\{X,Y_1,\ldots, Y_n\}$ has $m$ nodes. Then the gradient of the \frechet functional $F_S$ at $[X]$ is represented by the $m\times m$ matrix $\nabla \fr_S(X)$ defined by
\[ \lp\nabla\fr_S(X)\rp_{ij} = 2 \lp \w_X(x_i,x_j) - \frac{1}{n}\sum_{k=1}^n \w_{Y_k}(y_i,y_j) \rp.\]
\end{proposition}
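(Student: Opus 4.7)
The overall strategy is to use the alignment hypothesis to reduce everything to an explicit sum on a common node set, perturb along an arbitrary tangent direction, differentiate, and then read off the gradient from the resulting inner product. Since $X$ and each $Y_k$ have been aligned, all are carried on the common node set $\{x_1,\dots,x_m\}$ with common measure $\mu_X$, and the diagonal coupling $C^\star := \diag(\mu_X)$ is optimal for each pair $(X,Y_k)$. By Definition~\ref{def:alignment} this gives
\[
\dn(X,Y_k)^2 \;=\; c\sum_{i,j=1}^m \bigl|\w_X(x_i,x_j)-\w_{Y_k}(y_i,y_j)\bigr|^2\,\mu_X(x_i)\mu_X(x_j)
\]
for a fixed normalization constant $c$, so $F_S([X])$ is an explicit quadratic expression in the entries of $\w_X$.

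Next, fix a direction $[f]$ with representative $f\in L^2(X^2,\mu_X^{\otimes 2})$. By Proposition~\ref{prop:tangent_matrices}, for all sufficiently small $t>0$ the point $\exp_{[X]}([tf])$ is represented by the triple $(X,\w_X+tf,\mu_X)$ and lies on a geodesic emanating from $[X]$. Let $X_t$ denote this representative and write
\[
\phi_k(C,t) \;:=\; c\sum_{i,j}\bigl|\w_{X_t}(x_i,x_j)-\w_{Y_k}(y_i,y_j)\bigr|^2\,\mu_X(x_i)\mu_X(x_j)
\]
for the contribution of the diagonal coupling $C^\star$, so that $\phi_k(C^\star,0)=\dn(X,Y_k)^2$ and $\phi_k(C^\star,t)\geq \dn(X_t,Y_k)^2$ for all $t\geq 0$. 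Plugging in $\w_{X_t}=\w_X+tf$ and expanding gives
\[
\phi_k(C^\star,t) \;=\; \dn(X,Y_k)^2 + 2t\cdot c\sum_{i,j} f(x_i,x_j)\bigl(\w_X(x_i,x_j)-\w_{Y_k}(y_i,y_j)\bigr)\mu_X(x_i)\mu_X(x_j) + O(t^2).
\]

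The key technical point is that this upper bound is actually tight to first order in $t$, so that the above expansion also describes $\dn(X_t,Y_k)^2$. This is an application of the envelope (Danskin) principle for $\min_{C\in\coup(\mu_X,\mu_{Y_k})}\tfrac{1}{2}\la\mathcal{L}(X_t,Y_k)\otimes C,C\ra$ over the compact polytope of couplings: continuity of the objective in $t$ and compactness of the coupling set imply that any sequence $C_{t_n}$ of optimal couplings clusters at an optimizer at $t=0$, and combining the upper bound via $C^\star$ with the lower bound via $C_t$ shows that the one-sided derivative at $t=0^+$ agrees with $\partial_t\phi_k(C^\star,0)$. I expect this step to be the main obstacle, since non-uniqueness of optimal couplings requires one to compare $\phi_k(C_t,t)-\phi_k(C_t,0)$ against $\phi_k(C^\star,t)-\phi_k(C^\star,0)$ and use that $\phi_k(\cdot,0)$ is minimized at $C^\star$; everything else is elementary.

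Granting the envelope step, averaging over $k$ yields
\[
D_{[f]}F_S([X]) \;=\; 2c\sum_{i,j} f(x_i,x_j)\Bigl(\w_X(x_i,x_j)-\tfrac{1}{n}\sum_{k=1}^n\w_{Y_k}(y_i,y_j)\Bigr)\mu_X(x_i)\mu_X(x_j),
\]
which is precisely $\la [f],[g]\ra_{[X]}$ for the matrix $g$ whose $(i,j)$-entry equals $2\bigl(\w_X(x_i,x_j)-\tfrac{1}{n}\sum_k \w_{Y_k}(y_i,y_j)\bigr)$ (absorbing $c$ into the chosen normalization of the inner product on $T_{[X]}$ used in the definition of the gradient). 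Since this identity holds for all $[f]$, the uniqueness clause of the gradient lemma identifies $\nabla F_S([X])$ with $[g]$, giving the stated formula.
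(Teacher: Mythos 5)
Your proof follows essentially the same route as the paper's: both compute the directional derivative of $F_S$ by sandwiching $\frac{1}{t}\bigl(F_S(\exp_{[X]}([tf]))-F_S([X])\bigr)$ between an upper bound coming from the fixed diagonal coupling and a lower bound coming from the $t$-optimal couplings, pass to the limit $t\to 0^+$ (both arguments rely equally on the tacit assumption that the optimal couplings for $(X_t,Y_k)$ cluster at the diagonal coupling), and then identify $\nabla F_S$ from the resulting $L^2$ inner product via the uniqueness lemma. The envelope/Danskin step you flag as the main obstacle is precisely the sandwich the paper carries out, so the two arguments coincide.
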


\begin{remark} The preceding proposition gives us a meaningful description of a \frechet mean. Specifically, let $S = \{Y_1,\ldots, Y_n\}$ be a collection of finite networks, and let $X$ be such that $\nabla \fr_S(X) = 0$. Suppose also that $X$ is aligned to each $Y_i$. Then $X$ has the property that $\w_X(x_i,x_j) = \frac{1}{n}\sum_{k=1}^n \w_{Y_k}(y_i,y_j)$ for each $x_i,x_j \in X$.
In other words, $\w_X$ comprises arithmetic means of entries in the $\w_{Y_k}$. 
\end{remark}

\section{Experiments}

We now provide details of our computational experiments. Algorithms and numerous empirical remarks are provided in the supplementary materials. Python implementations are available on GitHub \cite{code}. Our code makes heavy use of the Python Optimal Transport Library \cite{flamary2017pot}.

\subsection{Geodesic Examples}\label{sec:geodesic_examples}

\begin{figure}
    \centering
    \includegraphics[width = 0.45\textwidth]{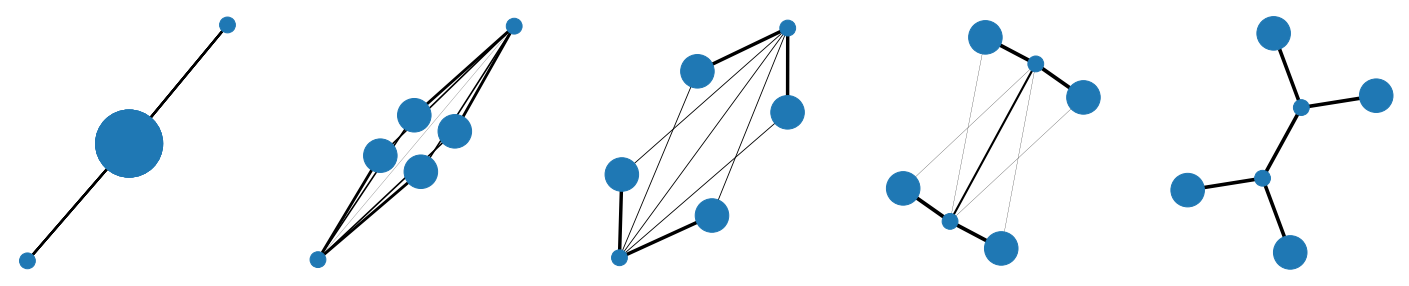}
    \includegraphics[width = 0.45\textwidth]{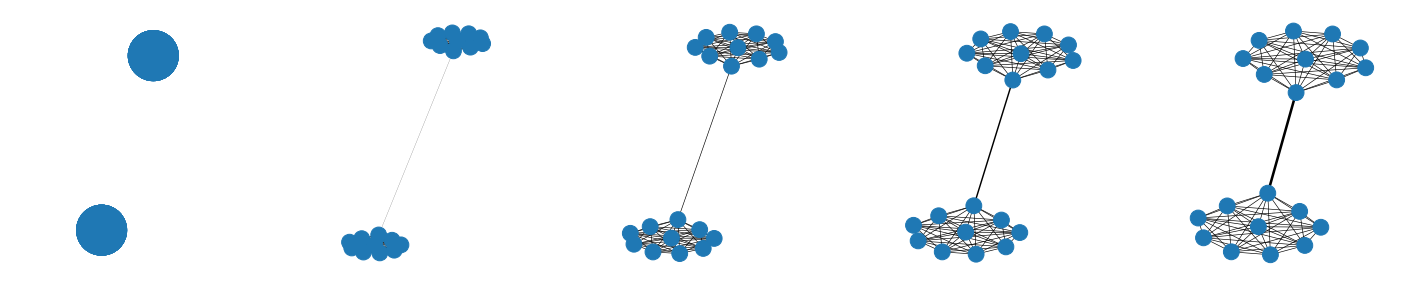}
    \includegraphics[width = 0.45\textwidth]{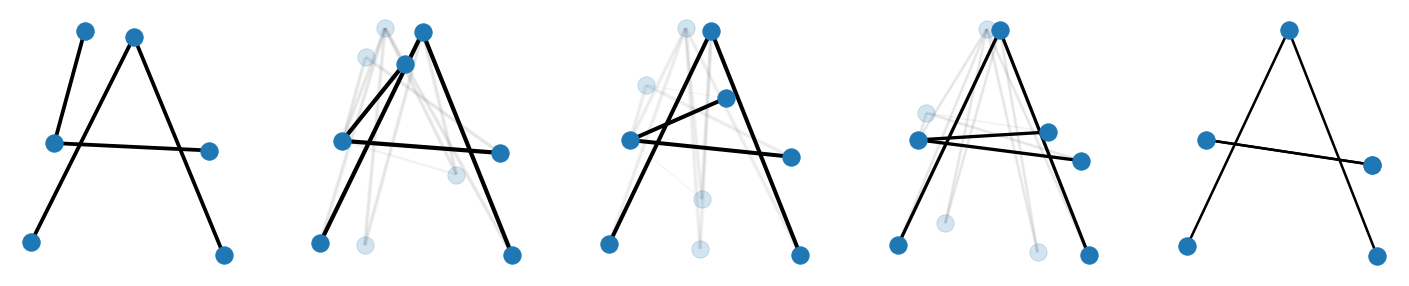}
    \includegraphics[width = 0.45\textwidth]{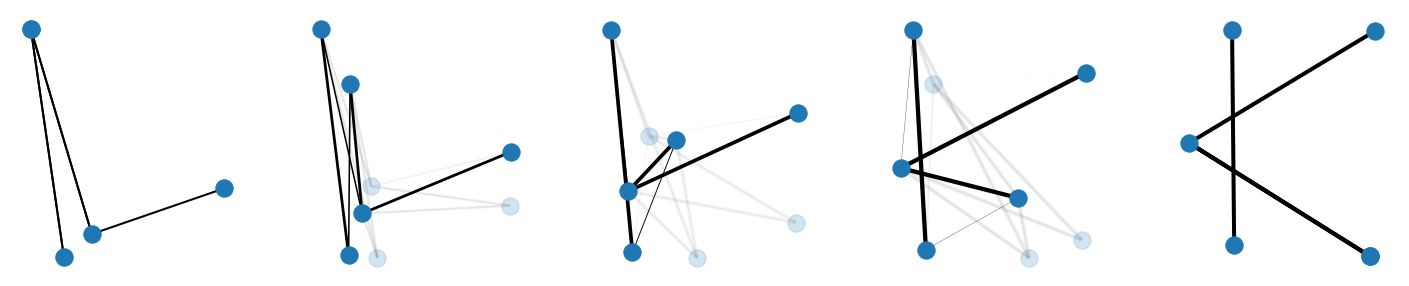}
    \caption{Examples of geodesics between simple graphs. See Section \ref{sec:geodesic_examples} for explanations.}
    \label{fig:geodesic_examples}
\end{figure}

Figure \ref{fig:geodesic_examples} shows several examples of geodesics between simple graphs. Each graph $X$ is a measure network with $\omega_X$ the graph adjacency matrix. Except for the example in the first row, each $\mu_X$ is a uniform  node measure.

The first row in the figure shows a geodesic between graphs with the same edge weight structure as the graphs in Figure \ref{fig:gromov_wasserstein_geodesic}, but with different node weights indicated by node sizes. Observe that the difference in node weights changes the geodesic path drastically. The second row in the figure shows a geodesic between a graph with two disconnected nodes, each with a self-loop (not shown) and a connected graph with large clusters. The geodesics in the third and fourth  rows of the figure are each between graphs from the ``Letter Graphs" graph classification benchmark dataset \cite{riesen2008iam}; the first between letters in the same class and the second between letters in different classes. Each geodesic is displayed with lower opacity on some nodes and edges---these nodes with weight less than a user-defined threshold (50\% (respectively, 40\%) of maximum node weight in the first (respectively, second) example) and edges with at least one endpoint meeting this criteria. This  technique allows us to understand  common graph features at multiple resolutions.

\subsection{Shape Classification}

As a proof-of-concept for incorporating this framework into machine learning pipelines, we present a simple shape classification experiment. The data consists of 20 object classes with 20 samples from each class from the well known MPEG-7 computer vision database  (see Figure \ref{fig:shape_samples}). Each shape consists of 100 planar points. The input data for the experiment  consists of pairwise distance matrices for each shape, yielding $400$ matrices of size $100 \times 100$. The ordering of the points was randomized when constructing the distance matrices. Weights on the nodes are uniform.

\begin{figure}
    \centering
    \includegraphics[width=\linewidth]{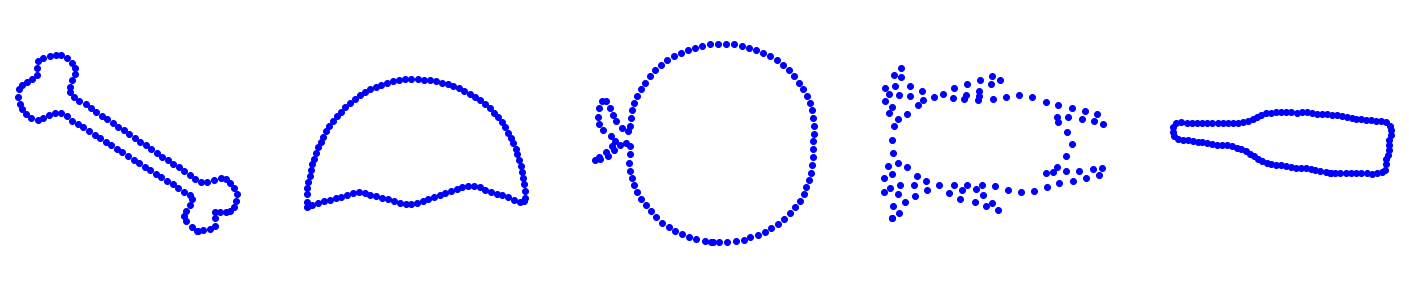}
    \caption{Samples from five shape classes in the classification experiment.}
    \label{fig:shape_samples}
\end{figure}

We consider three methodologies for classifying the shapes. In each experiment the same 80\% of the shapes were used as a training set. In a completely naive approach, a support vector machine was trained on the permuted distance matrices. With this method, the classification rate on the testing set was 19\%. In the second approach, one of the permuted distance training matrices $X$ was fixed and all other training matrices were aligned to $X$. The matrices were then ``centered" on $X$, which can be understood as pulling them back to tangent vectors in $T_{[X]}$ via the log map (or, rather, the coupling-dependent log map $\log_{[X]}^\mu([Y])$ here). An SVM was then trained on these tangent vectors. Classification was tested by aligning and centering the test matrices with $X$, yielding a classification rate of 84\%. In the final method, the Fr\'{e}chet mean $\overline{X}$ of all samples in the training set was computed. Then all training matrices were aligned to $\overline{X}$ and pulled back to tangent vectors, where an SVM was trained. Classification was once again performed by aligning and centering test matrices with $\overline{X}$, where the classification rate was improved to 94\%. This approach illustrates a template for vectorization of network data.

\subsection{Tangent PCA on planar shapes}

The Riemannian framework allows us to do other machine learning computations by pulling networks back to a tangent space; for example, we now present results of a tangent PCA experiment. Figure \ref{fig:exp-apples} illustrates the \emph{apple dataset} that we used. Each shape is represented as a measure network as in the previous subsection. To perform tangent PCA, we first computed a \frechet mean for these 20 shapes. Next we used log maps based at the chosen \frechet mean to pull back the 20 shapes to vectors in the tangent space. Here we performed PCA as usual. 

The first three principal directions explained 85\% of the variance in the data, and they are visualized in Figure \ref{fig:exp-apples-pca} via MDS embeddings. The first direction captures variance in the size of the apple; the second captures surface irregularities and the size and shape of the leaves, and the third captures the presence of a ``bite" on the apple.

\begin{figure}
    \centering
    \includegraphics[width=\linewidth]{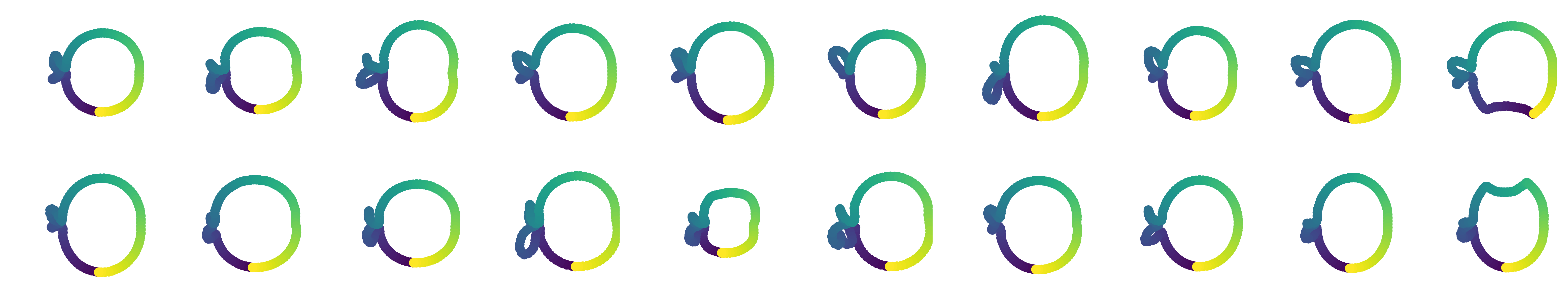}
    \caption{Apple dataset. Each shape is a  measure network $X$ containing 100 points, with $\mu_X$ uniform measure and $\omega_X$ the pairwise (Eulcidean) distance matrix between the points.}
    \label{fig:exp-apples}
\end{figure}

\begin{figure}
    \centering
    \includegraphics[width=\linewidth]{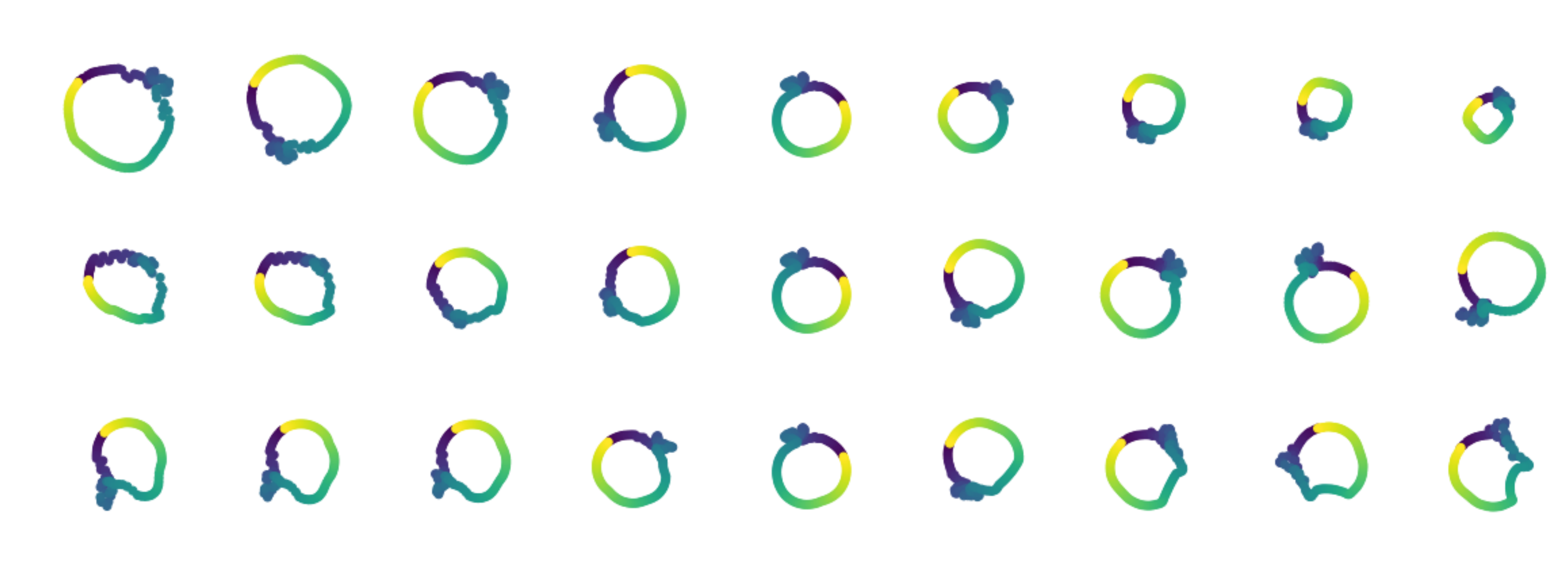}
    \caption{First three principal directions of variance for tangent PCA on the apple dataset.}
    \label{fig:exp-apples-pca}
\end{figure}

\subsection{Compressing an Asymmetric SBM Network}
\label{sec:exp-sbm-compr}

To illustrate our constructions on asymmetric networks, we generated a $100\times 100$ asymmetric stochastic block model (SBM) network $Y$ following the model provided in \cite{gwnets}. Here $Y$ consisted of five blocks $B_1,\ldots, B_5$ of 20 nodes each. For $y \in B_i$ and $y' \in B_j$, we sampled $\w_Y(y,y') \sim N(\mu_{ij},5)$, where $\mu_{ij} \in \{0, 25, 50, 75, 100\}$. Negative values were allowed. See Figure \ref{fig:exp-sbm} for an illustration of $Y$. Note that $Y$ is intuitively represented by a $5 \times 5$ ``ground-truth" matrix. 

We averaged $Y$ with a $5\times 5$ all-zeros matrix $X$ using the network compression approach given in the supplementary materials to see if our method would recover the ground truth matrix. The output of our method is shown in the middle panel of Figure \ref{fig:exp-sbm}---up to a permutation (shown in the right panel), this accurately recovered the matrix of $\mu_{ij}$ values. 

\begin{figure}
    \centering
    \includegraphics[width = 0.3\linewidth]{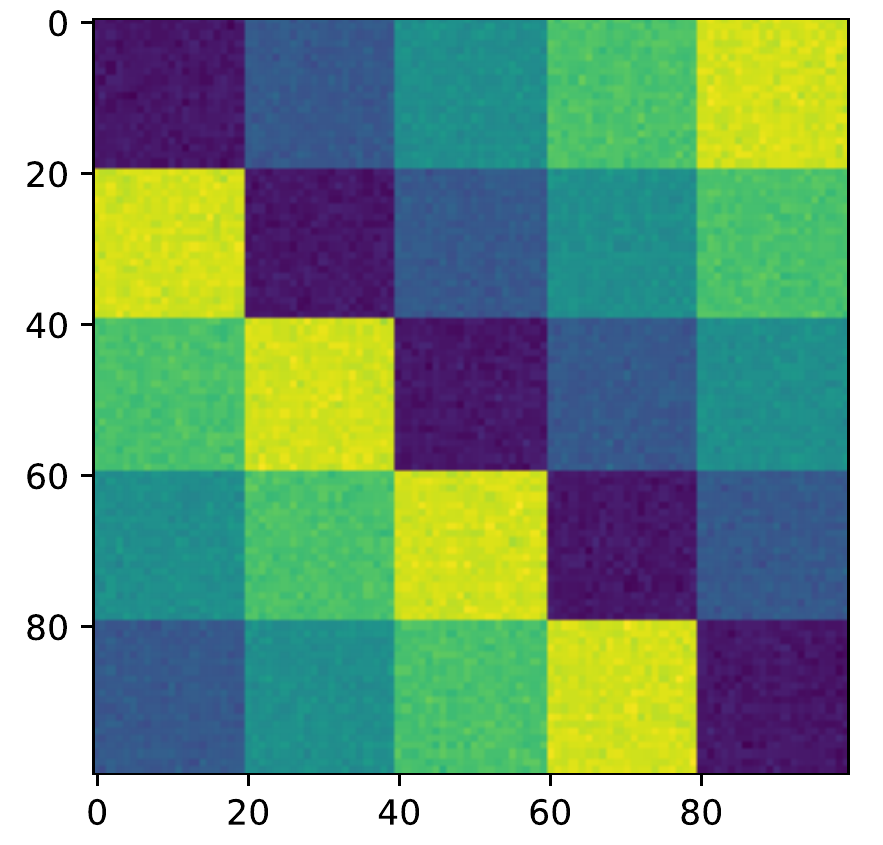}
    \includegraphics[width = 0.3\linewidth]{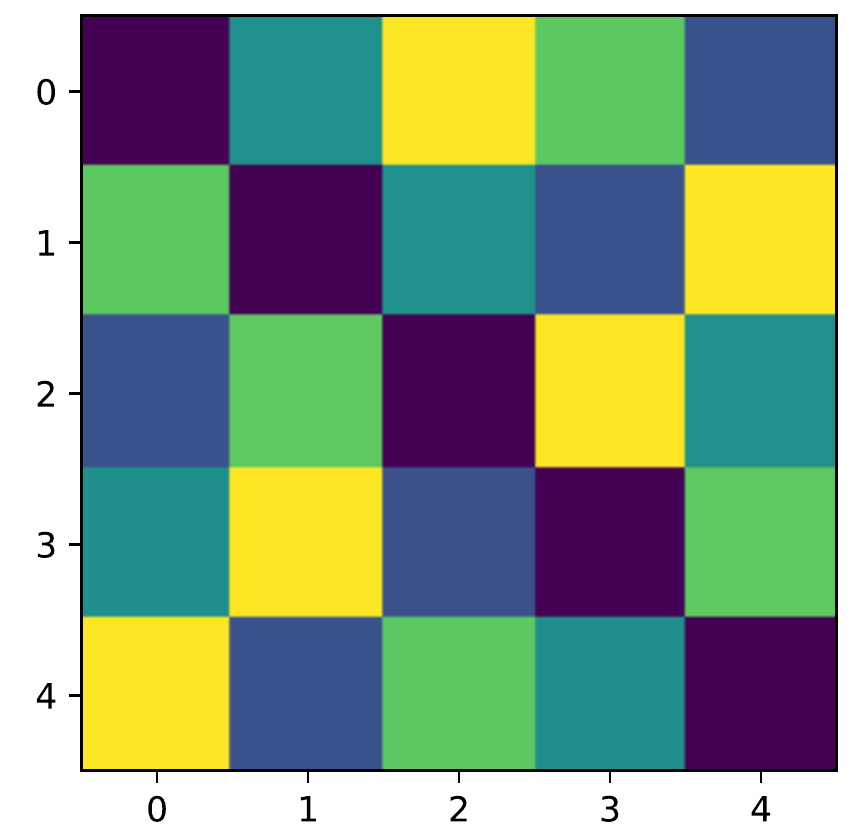}
    \includegraphics[width = 0.3\linewidth]{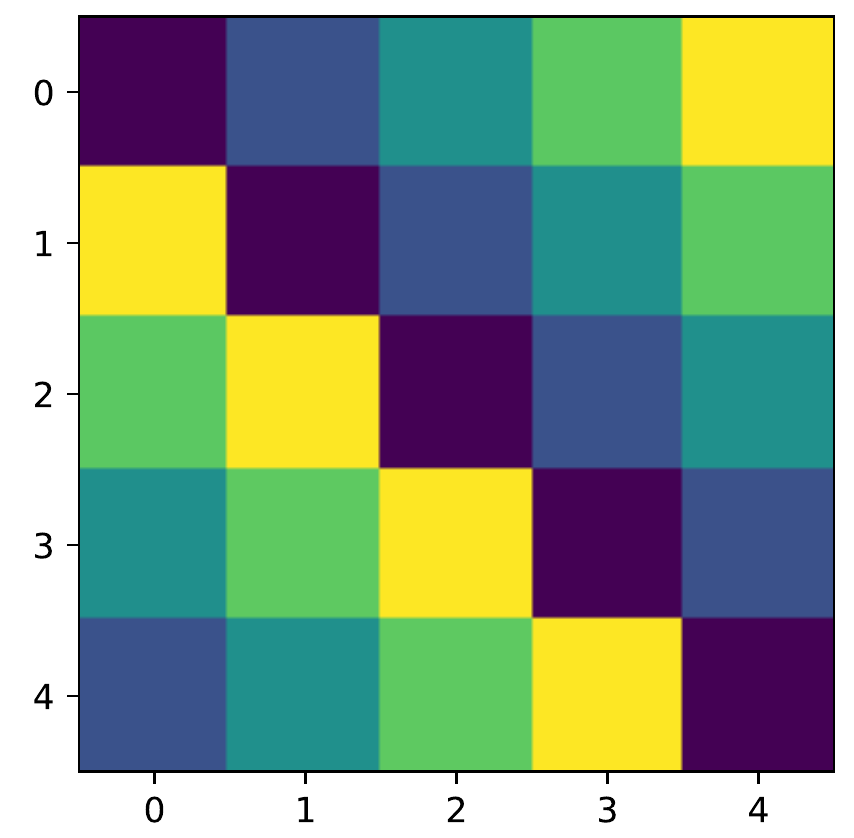}
    \caption{\textbf{Left:} $100 \times 100$ SBM $Y$ with entries drawn from $N(\mu, \s^2)$, where $\s^2:= 5$. The five colors correspond to $\mu = 0\mathrm{ (blue)}, 25, 50, 75, 100 \mathrm{ (yellow)}.$ \textbf{Middle: } $5 \times 5$ compressed average of $Y$ and the $5 \times 5$ all-zeros matrix $X$. Both $\mu_X,\mu_Y$ were taken to be uniform. Colors range in $\{0, 12.5, 25, 37.5, 50\}\pm 0.1$. \textbf{Right:} Permuted form of middle figure.}
    \label{fig:exp-sbm}
\end{figure}

\section{Discussion}

In this paper, we followed the seminal work of Sturm \cite{sturm2012space} on Riemannian structures induced by GW distances and produced a Riemannian framework for performing data analysis on collections of arbitrary matrices. There are many applications in data science which can be reframed using this formalism, such as network clustering and sketching and future work will focus on making these formulations precise.

There are several open challenges left to be explored from both theoretical and practical perspectives. On the theoretical side, one would like to obtain estimates on injectivity radii for measure networks with special properties (this amounts to replacing the $L^\infty$ bounds in Proposition \ref{prop:tangent_matrices} with $L^2$ bounds). It would also be interesting to determine conditions where the Fr\'{e}chet gradient flow is guaranteed to converge---in our applications, this either happened naturally or was enforced by a compression step. On the practical side, one would like to improve the scalability of our algorithms by incorporating entropic regularization \cite{cuturi2013sinkhorn} and the more sophisticated GW algorithm of \cite{xu2019scalable}. Several steps in our framework relied on the empirical observation of sparsity in optimal couplings, so incorporating entropic regularization will bring its own collection of theoretical challenges.

\section*{Acknowledgments}

We would like to thank Facundo M\'{e}moli for many useful discussions about Gromov-Wasserstein distance and for introducing us to the work of Sturm on the topic. We would also like to thank Justin Solomon for a helpful conversation about the matrix formulation of the GW problem for asymmetric networks. Finally, we thank the anonymous referees for their thoughtful comments.

\pagebreak

{\small
\bibliographystyle{ieee_fullname}
\bibliography{biblio}
}

\pagebreak

\appendix

\section{Derivation of Equation \ref{eq:gw-matrix}}
To avoid overloading notation, write $p:= \mu_X,\, q:=\mu_Y$. In matrix form, we have:
\[ \dn(X,Y) = \frac{1}{2} \min_{C\in \coup(p,q)} \Big( \sum_{ijkl} |X_{ik} - Y_{jl} |^2 C_{kl}C_{ij} \Big)^{\frac{1}{2}}\]
Expanding the term inside the square root yields three terms. The first is the following:
\begin{align*}
    \sum_{ijkl}X_{ik}^2 C_{kl}C_{ij} &= \sum_{ik}X_{ik}^2p_kp_i
    = \sum_i p_i \sum_k X^2_{ik}p_k\\
    &=\sum_i p_i  (X.^{\wedge 2}p)_i
    = \la p, X.^{\wedge 2}p \ra.
\end{align*}
Here the first equality followed by marginalization. Another term is as follows:
\begin{align*}
    \sum_{ijkl}Y_{jl}^2 C_{kl}C_{ij} &= \sum_{jl}Y_{jl}^2q_lq_j
    = \sum_l q_l \sum_j Y^2_{jl}q_j\\
    &=\sum_l q_l  (Y.^{\wedge 2}q)_l
    = \la q, Y.^{\wedge 2}q \ra.
\end{align*}
The final term is the only one that depends on $C$:
\begin{align*}
    -2  \sum_{ijkl}X_{ik}Y_{jl} & C_{kl}C_{ij}
    = -2  \sum_{il} (XC)_{il}(CY)_{il}\\
    &= -2 \la XC, CY \ra = -2 \tr(C^TX^T C Y ).
\end{align*}
The final equality holds by the definition of the Frobenius product, and this concludes the derivation of Equation \ref{eq:gw-matrix}. 

We further note that in the special case where $X, Y$ are symmetric positive definite, we can take a Cholesky decomposition to write:
\[X = UU^T, \, Y = V^TV.\]
Then we have:
\begin{align*}
    \tr(C^TX^T C Y ) &= \tr(C^T U^T U C V^TV ) \\
    &= \tr(V C^T U^T U C V^T) \\
    &= \la UCV^T, UCV^T \ra = \| UCV^T \|^2,
\end{align*}
where $\| \cdot \|$ denotes the Frobenius norm. The function $C \mapsto \| UCV^T \|^2$ is now seen to be convex.

\section{Proofs}

\begin{proof}[Proof of Proposition \ref{prop:gradient_of_GW}.]
Let $A = A_{XY}$. Writing $A_s = \frac{1}{2}\lp A + A^\ast\rp$ for the \emph{symmetrization} of $A$, we observe that 
\begin{align*}
2\la A_s C,C \ra &= \la AC + A^\ast C, C \ra = \la  AC, C \ra + \la C,  AC \ra \\
&= \la  AC, C \ra + \la  AC, C \ra  = 2 \la AC, C\ra.
\end{align*}
The computation then agrees with the computation in the symmetric setting of \cite{pcs16} after replacing $A$ with its symmetrization.
\end{proof}

\begin{lemma} 
\label{lem:geod-start-wisom}
Let $(Z,\w_Z,\mu_Z)$ be a finite measure network. Let $f \in L^2(Z^2, \mu_Z^{\otimes 2})$. For $t \in [0,1]$, define $\w_t : (Z\times Z)^2  \rightarrow \R$ as
\begin{align*}
 \w_t\lp (z_1,z_2), (z_3,z_4) \rp &= (1-t)\w_Z(z_1,z_3) \\
 &+ t\w_Z(z_2,z_4) + tf(z_2,z_4).
 \end{align*}
 Also let $\Delta$ denote the diagonal coupling between $\mu_Z$ and itself, i.e. the pushforward of $\mu_Z$ under the diagonal map $z \mapsto (z,z)$. Then we have:
 \[ (Z\times Z, \w_t, \Delta) \cong^w (Z,\w_Z,\mu_Z).\] 
\end{lemma}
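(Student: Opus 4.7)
The plan is to exhibit the weak isomorphism directly via an explicit common expansion, bypassing any heavy machinery from Sturm's axiomatic development. Since $\Delta$ is supported entirely on the diagonal $\{(z,z) : z \in Z\}$ of $Z \times Z$, which is canonically identified with $Z$ via $z \mapsto (z,z)$, the natural common expansion to try is $(Z, \mu_Z)$ itself.

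First I would define the two maps witnessing weak isomorphism: take $\pi_1 := \id_Z \colon Z \to Z$ (into the factor $(Z, \w_Z, \mu_Z)$) and let $\pi_2 \colon Z \to Z \times Z$ be the diagonal embedding $\pi_2(z) := (z,z)$ (into the factor $(Z \times Z, \w_t, \Delta)$). The two required pushforward identities are immediate: $(\pi_1)_{\#} \mu_Z = \mu_Z$ trivially, and $(\pi_2)_{\#} \mu_Z = \Delta$ is precisely the definition of $\Delta$ recalled in the lemma statement.

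The substantive content is verifying the pullback condition from the definition of $\cong^w$, namely $\|\pi_1^* \w_Z - \pi_2^* \w_t\|_\infty = 0$. On the identity side, $\pi_1^* \w_Z(z, z') = \w_Z(z, z')$. On the diagonal side, $\pi_2^* \w_t(z, z') = \w_t((z,z),(z',z'))$, which, after substituting into the explicit formula for $\w_t$, collapses to $(1-t)\w_Z(z,z') + t\w_Z(z,z') + t f(z,z') = \w_Z(z,z') + t f(z,z')$. The two $\w_Z$ contributions combine cleanly to exactly the $\pi_1^*$ pullback, as hoped.

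The step I expect to be the main obstacle is handling the residual $t f(z, z')$ term: pinning down precisely why this contribution does not spoil the $L^\infty$ identity between the two pullbacks on the relevant support. This is the careful bookkeeping I would have to work out, likely by invoking the conventions of the geodesic construction of Equation~\eqref{eqn:geodesic} that produced $\w_t$ in the first place, and the implicit identification of equivalence classes under the quotient $\sim$ introduced in the definition of the tangent space $T_{[X]}$. Once that is settled, all the conditions in the definition of $\cong^w$ are met, and the lemma follows.
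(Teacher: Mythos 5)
Your tripod and your computation are fine, but the obstacle you defer at the end is not a bookkeeping issue---it is a genuine obstruction, and no appeal to the conventions of \eqref{eqn:geodesic} or to the quotient $\sim$ will remove it. The relation $\sim$ identifies tangent vectors (i.e.\ $L^2$ functions), not networks, and the condition in the definition of $\cong^w$ is an exact $\mu$-essential identity of pullbacks on the common expansion. Your own calculation shows that the difference of the two pullbacks along your tripod is $t f(z,z')$, whose essential supremum is $t\|f\|_{\infty}$; this is nonzero whenever $t>0$ and $f$ is not a.e.\ zero, so the claimed identity $(Z\times Z,\w_t,\Delta)\cong^w(Z,\w_Z,\mu_Z)$ is simply false in general. (A one-point sanity check: $Z=\{z\}$, $\w_Z=0$, $f=1$, $t=1$ gives a one-point network of weight $1$ on the left and weight $0$ on the right, which are at positive $\dn$-distance.)

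What your computation actually establishes---and what the paper's own proof establishes---is $(Z\times Z,\w_t,\Delta)\cong^w(Z,\w_Z+tf,\mu_Z)$. The paper uses the first-factor projection $\pi(z_1,z_2)=z_1$ with common expansion $(Z\times Z,\Delta)$, which is the mirror image of your diagonal embedding (the two tripods are the same), but it verifies $\|\pi^*(\w_Z+tf)-\w_t\|_{\infty}=0$ on the support of $\Delta$, i.e.\ it compares $\w_t$ against the pullback of $\w_Z+tf$, not of $\w_Z$. That corrected statement is also the one actually invoked in the proof of Proposition~\ref{prop:tangent_matrices}, where $(Z,\w_Z+tf,\mu_Z)$ is identified with the point $\gamma(t)$ of the putative geodesic; the right-hand side of the lemma as printed appears to be a typo. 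So the fix is not further bookkeeping but a change of target: keep your maps and your pullback computation, replace $(Z,\w_Z,\mu_Z)$ by $(Z,\w_Z+tf,\mu_Z)$, and the residual $tf(z,z')$ term is absorbed exactly.
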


\begin{proof}
Consider the projection map $\pi: Z\times Z \rightarrow Z$ defined by $(z_1,z_2) \mapsto z_1$. It suffices to show that $\pi_\#\Delta = \mu_Z$ and $\lnorm \pi^*(\w_Z +tf) - \w_t \rnorm_\infty = 0$. For the first assertion, let $A \in \borel(Z)$. Then we have:
\begin{align*}
\pi_\#\Delta(A) = \Delta(A \times Z) = \mu_Z(A).
\end{align*}
For the second assertion, let $\lp(z_1,z_2),(z_3,z_4)\rp \in (Z\times Z)^2$. Suppose also $z_1 = z_2, \, z_3 = z_4$. Then we have:
\begin{align*}
&\pi^*(\w_Z + tf)\lp(z_1,z_2),(z_3,z_4)\rp \\
&= \w_Z(z_1,z_3) + tf(z_1,z_3)\\
&= \w_t((z_1,z_1), (z_3,z_3))\\
&= \w_t((z_1,z_2), (z_3,z_4)).
\end{align*}
The conclusion follows because $\Delta$ assigns zero measure to all pairs $(z,z')$ where $z\neq z'$.
\end{proof}

\begin{proof}[Proof of Proposition \ref{prop:tangent_matrices}]
Let $X=(X,\omega_X,\mu_X)$ be a finite measure network and let $f \in L^2(Z^2,\mu_Z^{\otimes 2})$ for some $Z \in [X]$. We wish to derive a condition which guarantees that
$$
\gamma(t) := [Z,\omega_Z +  f, \mu_Z]
$$
is a geodesic defined on $[0,1]$. For any $t$, $(Z,\omega_Z +  t f, \mu_Z)$ lies in the same weak isomorphism class as
$$
\lp Z \times Z, (1-t ) \omega_Z + t (\omega_Z +  f), \Delta \rp,
$$
where $\Delta$ denotes the diagonal coupling of $Z$ with itself, as in Lemma \ref{lem:geod-start-wisom}. This is the general form of a geodesic given above \eqref{eqn:geodesic}. Moreover, $\gamma(0) = [X]$, by the definition of $Z$. It therefore suffices to find a condition on $f$ which guarantees that $\Delta$ is an optimal coupling between $Z$ and the measure network 
\begin{equation}\label{eqn:Z_1_definition}
Z_1:=(Z, \omega_Z +  f, \mu_Z)
\end{equation}

Consider an arbitrary coupling $\mu$ of $Z$ with $Z_1$. The squared distortion $\dis(\mu)^2$ is given by
\begin{align*}
&\int_{(Z \times Z)^2} \lp \omega_Z(z_1,z_2) +   f(z_1,z_2) - \omega_Z(z_3,z_4) \rp^2 \mu \otimes \mu,
\end{align*}
where $\mu \otimes \mu$ is short for $\mu \otimes \mu ((dz_1,dz_2),(dz_3,dz_4))$. We rewrite this as
\begin{align}
&\int_{(Z \times Z)^2} \left\{ \lp \omega_Z(z_1,z_2) - \omega_Z(z_3,z_4) \rp^2 \right. \label{eqn:bracket_term} \\
&\hspace{.2in} + \Bigg.  2  \lp \omega_Z(z_1,z_2) - \omega_Z(z_3,z_4) \rp f(z_1,z_2) \Big\} \mu \otimes \mu \nonumber \\
&\hspace{.2in} +  \int_{(Z \times Z)^2} f(z_1,z_2)^2 \mu \otimes \mu. \label{eqn:simplifying_term}
\end{align}
By the fact that $\mu$ is a coupling of $\mu_Z$ with itself, the term in line $\eqref{eqn:simplifying_term}$ simplifies to 
$$
 \int_{Z^2} f(z_1,z_2)^2 \mu_Z(z_1) \mu_Z(z_2).
$$
On the other hand, this quantity is equal to the squared distortion $\dis(\Delta)^2$.

To guarantee that $\dis(\Delta) \leq \dis(\mu)$, it suffices that the bracketed term in \eqref{eqn:bracket_term} can be made non-negative. If each $\left| \omega_Z(z_1,z_2) - \omega_Z(z_3,z_4)\right|$ is zero, then $\omega_X$ is constant, in which case we immediately see that the bracketed term is nonnegative without restriction on $f$. Otherwise, let $\epsilon_{[X]}$ be one half of the infimal strictly positive value of $\left| \omega_Z(z_1,z_2) - \omega_Z(z_3,z_4)\right|$, ranging over all quadruples of points in $Z$. Since $Z$ is weakly isomorphic to $X$, the images of $\omega_X$ and $\omega_Z$ are equal, and since $X$ is finite these images are finite. It follows that the infimum $\epsilon_{[X]}$ is actually a minimum and is strictly positive. Under the assumption that $|f(z,z')| < \epsilon_{[X]}$ for each $z,z' \in Z$, it is straightforward to check that the bracketed term in \eqref{eqn:bracket_term} is nonnegative, and this completes the proof.
\end{proof}

\begin{proof}[Proof of Proposition \ref{prop:gradient_frechet_functional}]
For simplicity, suppose that $S = \{Y\}$ contains a single finite network and write $F=F_S$. The general case follows by similar arguments. After alignment, we can assume that $X = (X,\omega_X,\mu_X)$, $Y = (X,\omega_Y,\mu_X)$ and that the diagonal coupling $\Delta$ is optimal. 

Let $[f] \in T_{[X]}$. Once again, we assume for simplicity that $f$ is defined on a finite measure network, which we may as well take to be $X$ after realigning as necessary. The general case can be shown by adapting this specialized argument. 

The first task is to compute the directional derivative $D_{[f]}F([X])$. For $t \geq 0$, let $X_t = (X,\omega_X + t f, \mu_X)$ and let $\mu_t$ denote an optimal coupling of $X_t$ with $Y$ such that that $\lim_{t \rightarrow 0^+} \mu^t$ is the diagonal coupling $\mu_X \otimes \mu_X$. Note that for each $t$, the quantity
\begin{equation}\label{eqn:directional_derivative_frechet}
\frac{1}{t} \lp F(\exp_{[X]} (t[f])) - F([X]) \rp
\end{equation}
is upper bounded by
$$
\frac{1}{t} \lp \dis (\mu_t)^2 - \dis(\mu_X \otimes \mu_X)^2 \rp.
$$
It is a straightforward computation to show that this upper bound can be rewritten as 
\begin{align}
&t  \sum_{i,j} f(i,j)^2 \mu_X(i) \mu_X(j) \label{eqn:directional_derivative_frechet_2} \\
&\hspace{.2in} + 2  \sum_{i,j} (\omega_X(i,j) - \omega_Y(i,j) ) f(i,j) \mu_X(i) \mu_X(j). \nonumber
\end{align}
On the other hand, \eqref{eqn:directional_derivative_frechet} is lower bounded by
$$
\frac{1}{t} \lp \dis (\mu_t)^2 - \dis_{X,Y}(\mu_t)^2 \rp,
$$
where $\dis_{X,Y}(\mu_t)$ is the distortion of $\mu_t$ treated as a coupling of $X$ and $Y$. This simplifies to
\begin{align}
&t \sum_{i,j,k,\ell} f(i,j)^2 \mu_t(i,k) \mu_t(j,\ell) \label{eqn:directional_derivative_frechet_3} \\
&\hspace{.1in} + 2 \sum_{i,j,k,\ell} (\omega_X(i,j) - \omega_Y(k,\ell)) f(i,j) \mu_t(i,k) \mu_t(j,\ell). \nonumber
\end{align}
As $t \rightarrow 0^+$, quantities \eqref{eqn:directional_derivative_frechet_2} and \eqref{eqn:directional_derivative_frechet_3} both limit to
$$
2  \sum_{i,j} (\omega_X(i,j) - \omega_Y(i,j) ) f(i,j) \mu_X(i) \mu_X(j),
$$
and this therefore provides a formula for the directional derivative $D_{[f]}F([X])$. 

Finally, we note that
\begin{align*}
&2  \sum_{i,j} (\omega_X(i,j) - \omega_Y(i,j) ) f(i,j) \mu_X(i) \mu_X(j) \\
&\hspace{.2in} = \left<[f],\nabla F([X])\right>_{[X]}
\end{align*}
if we take $\nabla F([X])$ to be represented by the matrix
$$
\lp \nabla F (X) \rp_{ij} = 2 \lp \omega_X(x_i,x_j) - \omega_Y(y_i,y_j) \rp,
$$
which is the claimed form for this specific example. The general formula (for $S$ of larger cardinality) is derived by linearity.
\end{proof}

\section{Support sizes for optimal couplings} \label{sec:support_sizes}

The benefit of our representation of geodesics between measure networks is the empirical observation that (approximations of) optimal couplings tend to be sparse. This allows a geodesic between measure networks $X$ and $Y$ to be represented in a much smaller space than the naive requirement of size $|X| \cdot |Y|$. We have observed that it is more typical for the representation to require size which is linear in $|X| + |Y|$. Experimental evidence for this observation is provided in Figures \ref{fig:random_support_size} and \ref{fig:IMDB_support_size}.

There is also theoretical evidence for the observed small support size phenomenon. In \cite{chen2013sparse,chen2015new} the authors show that random quadratic programming problems tend to have sparse solutions with high probability. The setting of these articles is not exactly the one considered here (they use symmetric quadratic forms and optimize over the standard simplex) and it remains an open problem to give theoretical probabilistic guarantees for sparsity in the GW setting. Moreover, it would be interesting to get results for cost matrices with more realistic structures; e.g. binary matrices representing random directed adjacency matrices. 

\begin{figure}
    \centering
    \includegraphics[width = 0.45\textwidth]{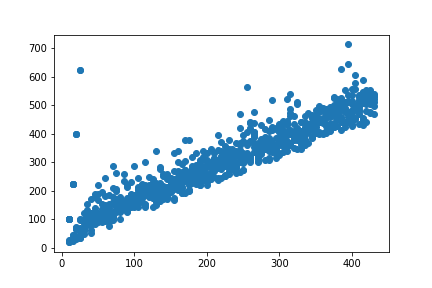}
    \caption{Support sizes for random measure networks. In each trial, a pair of Gaussian iid random weight matrices of size $n$ is drawn. The optimal coupling for the uniformly weighted networks is computed and its support size is plotted against $n$. In general, the support size grows linearly.}
    \label{fig:random_support_size}
\end{figure}

\begin{figure}
    \centering
    \includegraphics[width = 0.45 \textwidth]{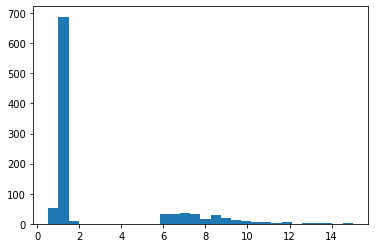}
    \caption{Support sizes for real networks. In each of 1000 trials, a random pair of graphs from the IMDB-BINARY graph classification benchmark dataset is chosen. The optimal coupling between their shortest path distance matrices (with uniform weights on the nodes) is computed. This histogram shows the distribution of support size divided by the sum of sizes of the graphs being compared. In general, the support size is a small multiple of the sum of graph sizes.}
    \label{fig:IMDB_support_size}
\end{figure}

\section{Support sizes for the iterative averaging scheme}
\label{sec:comp-asym}

\begin{figure*}
    \centering
    \includegraphics[width=0.48\linewidth]{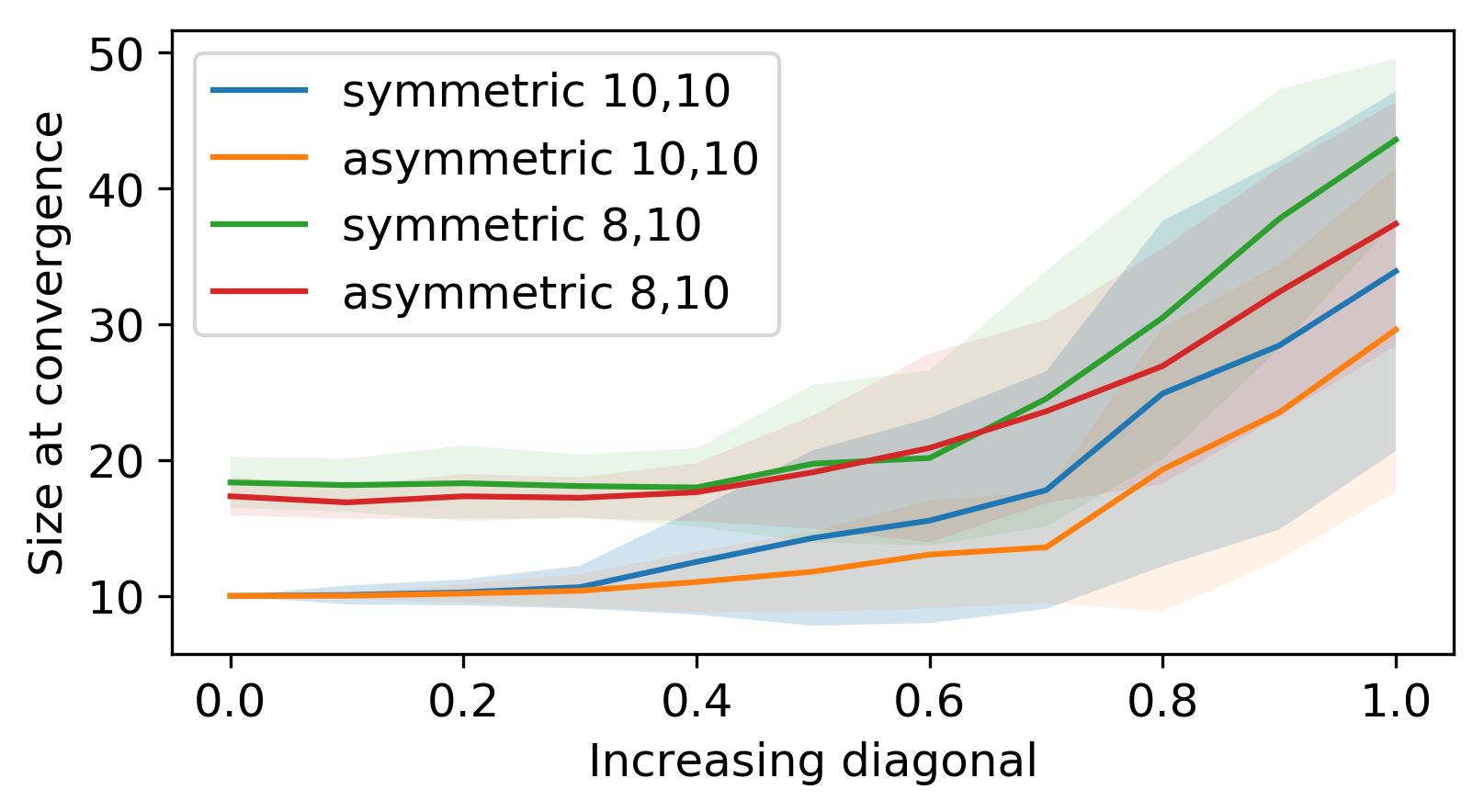}
    \includegraphics[width=0.48\linewidth]{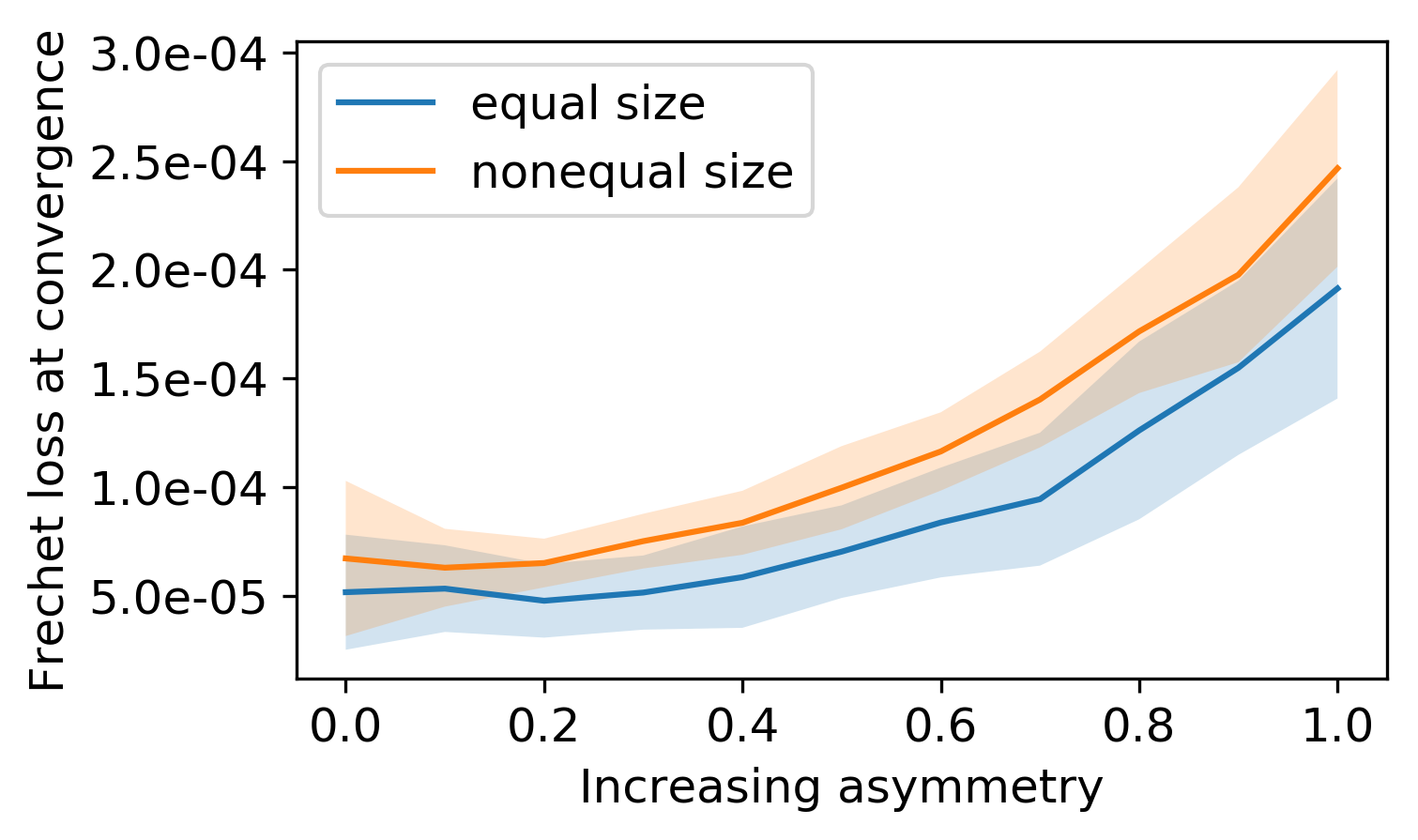}
    \caption{\textbf{Left:} The sizes of the iterates for the \frechet mean procedure depend on the diagonal entries of the network weight matrices. However, these sizes are not influenced by the level of asymmetry in the matrices. \textbf{Right:} The values of the \frechet loss function at convergence rise with increasing asymmetry of the network weight matrices.}
    \label{fig:mean-diag-asym}
\end{figure*}

Practical computation of \frechet means as described in the main text comes with the standard challenges of nonconvex optimization: the gradient descent for finding optimal couplings may get stuck in bad local minima, and this in turn may propagate into poor computation of \frechet means. Empirically we found that using a schedule for adjusting the gradient step size, i.e. using full gradient steps at the beginning and then using backtracking line search with Armijo conditions \cite{nocedal2006numerical} often worked well. Accelerating the gradient descent using the momentum method also works well. 

One aspect of the convergence problem is the size of the blowups needed to take discrete steps along the gradient flow of the \frechet functional. Towards characterizing the classes of networks for which this problem is more or less difficult, we set up the following experiments. First we generated networks $X_1,X_2$ with random weight matrices generated using Python's \texttt{numpy.random.rand} function. We equipped these networks with uniform probabilities. Next we wrote $X_j = Y_j + D_j$ for $j=1,2$, where $D_j$ consisted of the diagonal part of $X_j$, and $Y_j$ had zero diagonal. Next we wrote $X_j^{(\alpha)} := Y_j + \alpha D_j$ for $\alpha \in \{0, 0.1, 0.2,\ldots, 1\}$. For each $\alpha$, we set $\mathcal{X}^{(\alpha)} := \{X_1^{(\alpha)}, X_2^{(\alpha)}\}$ and computed the \frechet mean of each $\mathcal{X}^{(\alpha)}$ using 100 randomly generated initial seed networks. We repeated this procedure in the cases where the $X_j$ were both 10-node networks and where $X_1$ had 8 nodes, and $X_2$ had 10 nodes. Finally, we repeated this entire procedure after initially symmetrizing the $X_j$. The average sizes of the iterates are plotted against $\alpha$ in the left panel of Figure \ref{fig:mean-diag-asym}. The shading represents the standard deviation for each curve. First we note that as the diagonal terms are gradually added in, the sizes of the \frechet mean iterates grow rapidly. This suggests that when preprocessing data for the \frechet averaging procedure, it is helpful to use a scheme which enforces zero diagonals. The second observation is that there is some extra blowup that happens when averaging over a list of networks with different sizes. This is expected, as the optimal couplings between such networks cannot be permutation matrices, and hence some blowup is necessary.  

Another interesting observation is that the level of asymmetry does not seem to affect the sizes of the iterates. However, asymmetry does affect the final \frechet loss value at convergence. To test this effect, we generated matrices $X_j$ as above and decomposed them into symmetric and antisymmetric parts: $X_j = S_j+ A_j$. Next we chose $\alpha$ as above and considered the networks $Z_j^{(\alpha)}:= S_j + \alpha A_j$. For each $\alpha$, we set $\mathcal{Z}^{(\alpha)}:= \{Z_1^{(\alpha)}, Z_2^{(\alpha)}\}$ and computed the \frechet mean of each $\mathcal{Z}^{(\alpha)}$ using 100 randomly generated initial seed networks. We repeated this experiment for the cases where both $X_j$ had 10 nodes, and where $X_1$ had 8 nodes and $X_2$ had 10 nodes. The values of the final \frechet loss are plotted against $\alpha$ in the right panel of Figure \ref{fig:mean-diag-asym}. We observe that the final \frechet loss increases with asymmetry, which suggests that the \frechet function becomes more nonconvex with increasing asymmetry.

These observations point to the following open questions:
\begin{itemize}
    \item Can one place quantitative bounds on the rate of expansion of the \frechet mean iterates as a function of the diagonal values of weight matrices?
    \item Can one adapt methods such as graduated nonconvexity to improve convergence for asymmetric networks, in the sense of ``graduated asymmetry"?
\end{itemize}

To perform averages for networks with nonzero diagonal while circumventing the problem of expanding matrices, we adopted a simple---albeit Procrustean---method for restricting this expansion. This method has its own interesting application for \emph{network compression}, and we detail it next.

\subsection{Network compression}
\label{sec:compression}

Let $X, \, Y$ be finite networks, and let $\hat{X}$, $\hat{Y}$ denote their alignments. The aligned networks could, a priori, be larger in size than $X$ and $Y$. Thus if the alignment is iterated, as would be the case in computing \frechet means, we could have unbounded blowups in the sizes of these matrices. To prevent this situation, we pose the following question. Suppose $|X| < |Y|$. \emph{What is the projection of the vector $\w_{\hat{Y}} - \w_{\hat{X}}$ onto the space of $|X| \times |X|$ vectors?} Let $v$ denote this projection. Geometrically, we expect that $(X, \w_X + v, \mu_X)$ is a good $|X|$-node representative of $Y$. Practically, we can take the average of $(X,\w_X,\mu_X)$ and $(X,  \w_X + v, \mu_X)$ without any expansion and expect this object to be an approximate average of $X$ and $Y$. 

We adopt the following simple method to obtain a low-dimensional representation of the tangent vector $\nu:= \w_{\hat{Y}} - \w_{\hat{X}}.$ Following the notation used in Definition \ref{def:blowup}, write $\hat{X} = X[\mathbf{u}]$. Recall that $\w_{X[\mathbf{u}]}((x,i),(x',j)) = \w_X(x,x').$ Define the $|X|\times |X|$-dimensional vector $v$ as follows: for any $x,x' \in X$,
\begin{align*}
    v(x,x') &:= \frac{\sum_{i = 1}^{u_x} \sum_{j = 1}^{u_{x'}} (\w_{\hat{Y}} -\w_{\hat{X}})((x,i),(x',j)) }{u_x \cdot u_{x'}}\\
    &= \frac{\sum_{i = 1}^{u_x} \sum_{j = 1}^{u_{x'}} \w_{\hat{Y}}((x,i),(x',j)) }{u_x \cdot u_{x'}} - \w_X(x,x').
\end{align*}
Here we overload notation slightly to write $\w_{\hat{Y}}((x,i),(x',j))$, but this is well-defined because $\hat{Y}$ is aligned to $\hat{X}$ and $(x,i)$ is just an index.

To understand this construction, note that the elements of the tangent vector $\nu$ admit the following interpretation: $\nu_{pq}$ is just the difference $-\w_{\hat{X}}(x_p,x_q) + \w_{\hat{Y}}(y_p,y_q)$, i.e. it measures the change in the network weight from $x_p$ to $x_q$ when transferring from $\w_{\hat{X}}$ to $\w_{\hat{Y}}$. Here $x_p,x_q$ are just indices of elements in $\hat{X}$. In the metric space setting, this quantity is exactly the change in distance between $x_p$ and $x_q$ that one would observe by following the optimal transport map $\hat{\mu}$ between $\hat{X}$ and $\hat{Y}$. Intuitively in the metric setting, points which start nearby and end nearby under the map $\hat{\mu}$ correspond to similar tangent vector entries.

Under this interpretation, the vector $v$ simply averages out the changes that occur within and between blocks of $X[\mathbf{u}]$ when passing from $\w_{\hat{X}}$ to $\w_{\hat{Y}}$. Note in particular that $(X, \w_X + v, \mu_X)$ gives us a \emph{compressed representation} of $Y$. This is illustrated in Section \ref{sec:exp-sbm-compr}.

\begin{remark}
The averaging method of \cite{pcs16} proceeds by fixing a size for the requested \frechet mean and then performing an alternating optimization. This suggests the following open question: Is there a variant of the ``compressed log map" approach outlined above that agrees with the method in \cite{pcs16}?
\end{remark}

\section{Algorithms}

We now present pseudocode for our methods. Algorithm \ref{alg:optcoup} serves as a placeholder; it can be computed using gradient descent \cite{pcs16} and is implemented in the Python Optimal Transport Library \cite{flamary2017pot}.

\begin{algorithm}[H]
\caption{Compute minimizer of the GW functional}\label{alg:optcoup}
\begin{algorithmic}[1]
\Function{optCoup}{$A,B,a,b$}
\State \emph{// $A \in \R^{n\times n}$, $B \in \R^{m\times m}$. $a,b$ probability vectors} 
\Return $C$ \Comment{$n \times m$ optimal coupling}
\EndFunction
\end{algorithmic}
\end{algorithm}

\begin{algorithm}[H]
\caption{Computing the log map}\label{alg:logmap}
\begin{algorithmic}[1]
\Function{logMap}{$A,B,a,b$}
\State \emph{// $A \in \R^{n\times n}$, $B \in \R^{m\times m}$. $a,b$ probability vectors} 
\State \parbox{0.8\linewidth}{\emph{// Lift geodesic from $A$ to $B$ to tangent vector based at $A$}}\\

\State Initialize $splitData$ = [] \Comment{store metadata}
\State $C$ = \Call{optCoup}{$A,B,a,b$}
\State Find rows, columns of $C$ with multiple nonzeroes
\State Store indices in $splitData$
\State Blow-up $A,B,a,b, C$ according to $splitData$ 
\State $C = (C!=0)$ \Comment{convert $C$ to permutation matrix}
\State $B = C*B*C^T$ \Comment{align $B$ to $A$} 
\State $v = - A + B$ \Comment{tangent vector}
\State \Return $A,a, v, splitData$
\EndFunction
\end{algorithmic}
\end{algorithm}

\begin{algorithm}[H]
\caption{Computing the \frechet gradient}\label{alg:frechetgrad}
\begin{algorithmic}[1]
\Function{frechetGrad}{$AList,aList,A,a$}
\State \emph{// list of networks and a seed network} 
\State Initialize $tanVec$ = [] \Comment{list of tangent vectors}
\State $n$ = number of networks in $AList$
\State $C$ = \Call{optCoup}{$A,B,a,b$}
\For{$j =0,\ldots, n-1$} 
\State $A,a,v, sD$ = \Call{logMap}{$AList[j],aList[j],A,a$}
\State \emph{// $A,a$ may be blown-up at each step} 
\State \parbox{0.85\linewidth}{{Use $sD$ to blow-up rows of $tanVec$ elements to be compatible with the newly blown-up $A$}}
\State Append $v$ to $tanVec$ 
\EndFor
\State $g = sum(tanVec)/n$ \Comment{\frechet gradient}
\State \Return $g$
\EndFunction
\end{algorithmic}
\end{algorithm}

\end{document}